\theoremstyle{theorem}
\newtheorem{theorem}{Theorem}
\newtheorem{corollary}[theorem]{Corollary}
\newtheorem{prop}[theorem]{Proposition}
\newtheorem{lemma}[theorem]{Lemma}
\newtheorem{conjecture}[theorem]{Conjecture}
\theoremstyle{definition}
\newtheorem{fact}[theorem]{Fact}
\newtheorem{question}[theorem]{Question}
\def\S{\Sigma}
\def\Mod{{\rm Mod}}
\def\Out{{\rm Out}}
\def\span{{\rm span}}
\def\id{{\rm id}}
\def\Z{\mathbb{Z}}
\def\Q{\mathbb{Q}}
\def\H{{\rm H}}
\def\Sp{{\rm Sp}}
\def\Homeo{{\rm Homeo}}
\numberwithin{theorem}{section}
\begin{document}

\title{On the mapping class group action on the homology of surface covers}

\address{Max Planck Institute for Mathematics, Bonn 53111, Germany}
\email{spiridonovia@ya.ru}
\author{Igor Spiridonov}
\date{June 3, 2024}

\subjclass[2020]{Primary 57K20; Secondary 57M10, 57M12.}

\maketitle

\begin{abstract}
	Let $\phi \in {\rm Mod}(\Sigma)$ be an arbitrary element of the mapping class group of a closed orientable surface $\Sigma$ of genus at least $2$. For any characteristic cover $\widetilde{\Sigma} \to \Sigma$ one can consider the linear subspace ${\rm H}_1^{f.o.}(\widetilde{\Sigma}, \mathbb{Q})^\phi \subseteq {\rm H}_1(\widetilde{\Sigma}, \mathbb{Q})$ consisting of all homology classes with finite $\phi$-orbit. We prove that $\dim {\rm H}_1^{f.o.}(\widetilde{\S}, \mathbb{Q})^\phi$ can be arbitrary large for any fixed $\phi \in {\rm Mod}(\Sigma)$.
\end{abstract}

\section{Introduction}

\subsection{Result}

Let $\S$ be a compact orientable surface of genus at least $2$ without boundary. Let $\Mod(\S)$ be the \textit{mapping class group} of $\S$, defined by $\Mod(\S) = \pi_{0}(\Homeo^{+}(\S))$, where $\Homeo^{+}(\S)$ is the group of orientation-preserving homeomorphisms of $\S$. Let $p: \widetilde{\S} \to \S$ be a finite characteristic cover, i.e. $p_*(\pi_1(\widetilde{\S})) \lhd \pi_1(\S)$ is a finite index characteristic subgroup. Then the action of $\Mod(\S)$ on $\H_1(\widetilde{\S}, \Q)$ is well-defined up to the action of the finite deck group $\pi_1(\S) / p_*(\pi_1(\widetilde{\S}))$. Therefore, for each  $v \in \H_1(\widetilde{\S}, \Q)$ one can ask about finiteness of its orbit with respect to the action of $\Mod(\S)$ and its subgroups.

For each $\phi \in \Mod(\S)$ we consider the linear subspace $\H_1^{f.o.}(\widetilde{\S}, \Q)^\phi \subseteq \H_1(\widetilde{\S}, \Q)$ consisting of all homology classes with finite $\phi$-orbit.
The main result of this paper is as follows.

\begin{theorem} \label{mainth}
	Let $\phi \in \Mod(\S)$ be any element. Then there exist finite  characteristic covers $\widetilde{\S} \to \S$ with arbitrary large $\dim \H_1^{f.o.}(\widetilde{\S}, \Q)^\phi$.
\end{theorem}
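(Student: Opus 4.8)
The plan is to bound $\dim \H_1^{f.o.}(\widetilde{\S}, \Q)^\phi$ from below by the first Betti number of a suitable surface bundle over the circle and then to import the required unboundedness from $3$-manifold topology. The first remark is that the quantity in question is insensitive to passing to powers of $\phi$: if $\widetilde\phi$ is a lift of $\phi$ acting on $\H_1(\widetilde{\S}, \Q)$, then $\langle \widetilde\phi^{k}, \mathrm{Deck}(\widetilde{\S}/\S)\rangle$ has finite index in $\langle \widetilde\phi, \mathrm{Deck}(\widetilde{\S}/\S)\rangle$, so a homology class has finite orbit under one exactly when it has finite orbit under the other; hence $\H_1^{f.o.}(\widetilde{\S}, \Q)^\phi = \H_1^{f.o.}(\widetilde{\S}, \Q)^{\phi^{k}}$ for every $k \geq 1$. (If $\phi$ has finite order then already a power of any lift of $\phi$ to a characteristic cover is isotopic to a deck transformation, hence acts with finite order on $\H_1$, so $\H_1^{f.o.}(\widetilde{\S}, \Q)^\phi = \H_1(\widetilde{\S}, \Q)$ and its dimension grows with the degree; we may thus assume $\phi$ has infinite order.)

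Fix a homeomorphism $f$ representing $\phi$ and let $M = M_f$ be its mapping torus, a closed aspherical $3$-manifold with $\pi_1(M) \cong \pi_1(\S) \rtimes_\phi \Z$. For a finite-index characteristic subgroup $N \lhd \pi_1(\S)$, the automorphism $\phi$ preserves $N$ and hence lifts to a mapping class $\widehat\phi$ of the corresponding characteristic cover $\widehat{\S}$; write $M_N$ for the mapping torus of $\widehat\phi$, a finite cover of $M$ associated with $N \rtimes_\phi \Z \leq \pi_1(M)$. The Wang exact sequence of the bundle $\widehat{\S} \hookrightarrow M_N \to S^1$ yields
\[
b_1(M_N) \;=\; 1 + \dim \ker\!\big(\widehat\phi_* - \id \ \big|\ \H_1(\widehat{\S}, \Q)\big),
\]
and since a fixed vector has a singleton orbit, $\ker(\widehat\phi_* - \id) \subseteq \H_1^{f.o.}(\widehat{\S}, \Q)^{\widehat\phi} = \H_1^{f.o.}(\widehat{\S}, \Q)^\phi$. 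Thus $\dim \H_1^{f.o.}(\widehat{\S}, \Q)^\phi \geq b_1(M_N) - 1$, and the same holds with $\phi$ replaced by any power (harmlessly, by the first paragraph). It therefore suffices to produce characteristic covers $\widehat{\S} \to \S$ whose associated mapping torus has arbitrarily large first Betti number.

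Now $M_f$ is a surface bundle over $S^1$ with fibre of genus $\geq 2$, so by Geometrization it is hyperbolic (when $\phi$ is pseudo-Anosov) or a graph or mixed manifold (when $\phi$ is reducible of infinite order). In all of these cases $\pi_1$ is known to have infinite virtual first Betti number — in the hyperbolic case by Agol's resolution of the Virtual Haken Conjecture (building on Wise and on Kahn–Markovic), and in the graph and mixed cases by work of Liu, Przytycki–Wise and others. Hence $M_f$ has finite covers $\widetilde M$ with $b_1(\widetilde M)$ as large as we please, and it remains to dominate such a $\widetilde M$ by a cover of the form $M_N$ (for a power of $\phi$). Put $\Gamma = \pi_1(\widetilde M) \leq \pi_1(M)$; its image in $\Z$ is some $k\Z \neq 0$, and we may pick $w \in \pi_1(\S)$ with $w t^{k} \in \Gamma$, where $t$ generates the $\Z$-factor. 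Let $N$ be the characteristic core of the finite-index subgroup $\Gamma \cap \pi_1(\S)$ of $\pi_1(\S)$, and let $m$ be the order of the image of $w t^{k}$ in the finite group $\big(\pi_1(\S)/N\big) \rtimes \langle \overline\phi\rangle$, with $\overline\phi$ the (finite-order) automorphism induced by $\phi$. Then $(w t^{k})^{m} = p\, t^{km}$ for some $p \in N \subseteq \Gamma$, so $t^{km} \in \Gamma$ and consequently $N \rtimes_{\phi^{km}} \Z \leq \Gamma$; in other words the mapping torus of the lift of $\phi^{km}$ to $\widehat{\S}$ is a finite cover of $\widetilde M$. As first Betti numbers do not decrease under finite covers, this mapping torus has $b_1 \geq b_1(\widetilde M)$, and the previous paragraph applied to $\phi^{km}$ completes the proof.

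The main obstacle is not internal to this argument: its conceptual heart is the inequality $\dim \H_1^{f.o.}(\widehat{\S}, \Q)^\phi \geq b_1(\widetilde M) - 1$ for the appropriate covers $\widetilde M$ of the mapping torus, after which the unbounded growth of the finite-orbit subspace is inherited directly from the largeness of $3$-manifold groups. What is genuinely deep is the external input — the virtual infiniteness of $b_1$ for $M_f$ — which for pseudo-Anosov $\phi$ rests on the machinery surrounding the Virtual Haken Conjecture; by contrast the invariance of $\H_1^{f.o.}$ under powers, the Wang-sequence identity, and the semidirect-product bookkeeping that lets one take the fibre cover characteristic are all elementary, the last requiring only some care with the ambiguity of lifts modulo deck transformations.
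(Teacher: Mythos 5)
Your argument is correct. In the pseudo-Anosov case it coincides with the paper's: both pass to the mapping torus, invoke Thurston's hyperbolization and Agol's theorem to get a finite cover $\widetilde M$ with large $b_1$, and convert $b_1(\widetilde M)$ into $1+\dim \H_1(\widehat\S,\Q)^{\widehat\phi}$ — you via the Wang sequence, the paper via the five-term exact sequence, which is the same computation. The genuine difference is the reducible case. The paper deliberately keeps it elementary: it explicitly constructs cyclic covers of $\S$ in which the $N$ lifts of the $\phi$-invariant curve are homologically independent (up to one relation in the separating case), and these classes have finite orbit by inspection. You instead run reducible $\phi$ through the same mapping-torus pipeline, which requires $vb_1(M_\phi)=\infty$ for toroidal surface bundles; this is true, but your attribution is off — for closed non-geometric graph manifolds it is due to Kojima and Luecke rather than Liu or Przytycki--Wise (who treat graph manifolds with boundary and mixed manifolds), with all cases subsumed in the Aschenbrenner--Friedl--Wilton corollary the paper already cites. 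So your route is more uniform but leans on heavier external input where the paper gets by with a picture; it also loses the byproduct that the paper's reducible construction lands the finite-orbit classes inside the simple-closed-curve homology, which matters for the discussion in the introduction. On the other hand, your group-theoretic step — passing to the characteristic core $N$ of $\pi_1(\widetilde M)\cap\pi_1(\S)$ and verifying $N\rtimes\langle t^{km}\rangle\leq \pi_1(\widetilde M)$ — produces a characteristic cover directly, so you bypass the paper's monotonicity lemma (Lemma 2.2 and Corollary 2.4) for upgrading regular covers to characteristic ones.
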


In view of Theorem \ref{mainth}, it is natural to ask the following question, to which the author does not know the answer.

\begin{question} \label{Q}
	Let $\phi \in \Mod(\S)$ be any element and let
	$$ \dots \to \widetilde{\S}_k \to \widetilde{\S}_{k-1} \to \dots \to \widetilde{\S}_1 \to \S $$
	be any infinite tower of finite characteristic covers of $\S$ such that $\cap_{k=1}^\infty \pi_1(\widetilde{\S}_k) = \{1\}$. Is it true that $\lim_{k \to \infty} \dim \H_1^{f.o}(\widetilde{\S}_k, \Q)^\phi = \infty$?
\end{question}

Note that the usual space of $\phi$-invariants $\H_1(\widetilde{\S}, \Q)^\phi \subseteq \H_1(\widetilde{\S}, \Q)$ is well-defined after fixing a base point $x_0 \in \widetilde{\S}$. Indeed, there is a well-defined action of $\Mod(\S) \cong \Out^+(\pi_1(\S, p(x_0)))$ on the characteristic subgroup $p(\pi_1(\widetilde{\S}, x_0)) \lhd \pi_1(\S, p(x_0))$, and we have a canonical isomorphism $\H_1(\pi_1(\widetilde{\S}, x_0)), \Q) \cong \H_1(\widetilde{\S}, \Q)$. The author does not know if the statement of Theorem \ref{mainth} remains true after replacing $\dim \H_1^{f.o.}(\widetilde{\S}, \Q)^\phi$ by $\dim \H_1(\widetilde{\S}, \Q)^\phi$ in the above setting.

\subsection{History}

The (virtual) representations of the group $\Mod(\S)$ in the symplectic group $\Sp(\H_1(\widetilde{\S}, \Q))$ for regular coves $\widetilde{\S} \to \S$ are called \textit{Prym representations}. This notion is a natural generalisation of the standard symplectic representation of  $\Mod(\S)$ in $\Sp(\H_1(\S, \Q))$ and was initially introduced by Looijenga \cite{Looijenga} for the case of abelian covers. In \cite{Grunewald}, Grunewald, Larsen, Lubotzky, and Malestein constructed a lot of arithmetic quotients of the mapping class group using Prym representations.

In 2013, Putman and Wieland \cite{PutmanWieland} stated the following conjecture, which is one of the central open questions about Prym representations (the similar question for graph covers was studied by Farb and Hensel \cite{FarbHensel2}).

\begin{conjecture}\cite[Conjecture 1.2]{PutmanWieland}  \label{PWconj}
	Let $\widetilde{\S} \to \S$ be a finite characteristic cover. Then for each nonzero homology class  $v \in \H_1(\widetilde{\S}, \Q)$ the $\Mod(\S)$-orbit of $v$ is infinite.
\end{conjecture}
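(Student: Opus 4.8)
The plan is to argue by contradiction and to spread the hypothesis out into a family over moduli space, where Hodge-theoretic rigidity becomes available. Suppose some $0 \neq v \in \H_1(\widetilde{\S}, \Q)$ had finite $\Mod(\S)$-orbit. Let $G = \pi_1(\S)/p_*\pi_1(\widetilde{\S})$ be the finite deck group, so that $\H_1(\widetilde{\S}, \Q)$ is a $\Q[G]$-module; passing to a finite-index subgroup $\Gamma \leq \Mod(\S)$ for which the action on $\H_1(\widetilde{\S},\Q)$ is $\Q$-linear and $G$-equivariant, the vector $v$ still has finite $\Gamma$-orbit. Geometrically $\Gamma$ is the (orbifold) fundamental group of a finite \'etale cover $\M' \to \M_{g,1}$ over which there is a universal family of $G$-covers $\widetilde{\C} \to \C \to \M'$; writing $f : \widetilde{\C} \to \M'$, the sheaf $\mathbb{V} = R^1 f_* \Q$ is a polarized variation of Hodge structure of weight one whose monodromy representation is the Prym (Putman--Wieland) representation of $\Gamma$, and $v$ determines a class in the fibre $\mathbb{V}_{s_0}$ over the point $s_0$ classifying $\widetilde{\S} \to \S$. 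The first step is then elementary: the $\Q$-span $W \subseteq \mathbb{V}_{s_0}$ of the finite orbit $\Gamma \cdot v$ is a finite-dimensional $\Gamma$-subrepresentation, and since every $\gamma \in \Gamma$ permutes the finite spanning set $\Gamma \cdot v$, the image of $\Gamma$ in $\mathrm{GL}(W)$ embeds into the symmetric group on that set, hence is finite. By the equivalence between local systems and monodromy representations, $W$ extends to a nonzero sub-$\Q$-local system $\mathbb{W} \subseteq \mathbb{V}$ with finite monodromy, and it suffices to prove that $\mathbb{V}$ admits no such $\mathbb{W}$.

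To rule out $\mathbb{W}$ one appeals to Hodge theory. By Deligne's semisimplicity theorem for polarizable variations of Hodge structure, $\mathbb{W}$ is itself a polarized sub-VHS of $\mathbb{V}$ of weight one; since its monodromy is finite it is unitary, so its Higgs field vanishes, i.e. $F^1\mathbb{W}_{\mathbb{C}}$ is a holomorphic subbundle of the Hodge bundle $F^1\mathbb{V}_{\mathbb{C}} = f_*\omega_{\widetilde{\C}/\M'}$ which is flat for the Gauss--Manin connection and has vanishing second fundamental form (equivalently, the period map of $\mathbb{W}$ is constant). One then seeks a contradiction from positivity of the Hodge bundle: the Hodge metric on $f_*\omega_{\widetilde{\C}/\M'}$ has curvature that is strictly negative in precisely the directions in which the Higgs field of $\mathbb{V}$ is nondegenerate, and that Higgs field is, on each $G$-isotypic piece, the relevant Kodaira--Spencer/cup-product pairing on $H^0(\widetilde{\C}_s,\omega)$ --- essentially the derivative of the period map --- which for the universal (hence maximally varying) family of curves is nondegenerate enough that a flat holomorphic subbundle of $f_*\omega_{\widetilde{\C}/\M'}$ cannot exist; passing to the Deligne extension over $\overline{\M}_{g,1}$ and using that Dehn twists act at the boundary with infinite unipotent local monodromy (an Arakelov-type inequality) gives the sharpest form of this incompatibility. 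This would force $\mathbb{W} = 0$, hence $v = 0$, a contradiction.

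The honest difficulty --- and the reason this is only a strategy --- is that the statement to be proved is the Putman--Wieland conjecture, which is open, and the gap is exactly the quantitative positivity invoked above: the Hodge-theoretic argument is presently known to rule out a flat subbundle of $f_*\omega_{\widetilde{\C}/\M'}$ (equivalently, a nonzero class with finite $\Gamma$-orbit) only under extra hypotheses, such as small rank of $\mathbb{W}$ or large genus of $\S$, as in the work of Landesman and Litt, and no mechanism controls a flat subbundle of arbitrary rank, or all $G$-isotypic components simultaneously. Moreover Putman and Wieland proved \cite{PutmanWieland} that, up to a shift of genus, their conjecture is equivalent to Ivanov's conjecture that every finite-index subgroup of a mapping class group has finite abelianization, so a complete proof would subsume that long-standing problem. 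This obstruction is what makes Theorem~\ref{mainth} the natural unconditional result: when $\Mod(\S)$ is replaced by the cyclic subgroup $\langle \phi \rangle$ generated by a single $\phi$, the analogue of the conjecture fails --- nonzero classes with finite $\langle\phi\rangle$-orbit do exist --- and Theorem~\ref{mainth} shows it fails quantitatively, such classes spanning a subspace of arbitrarily large dimension as the cover varies.
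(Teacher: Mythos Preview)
The statement you were asked to prove is not a theorem in the paper: it is Conjecture~\ref{PWconj}, the Putman--Wieland conjecture, which the paper quotes from \cite{PutmanWieland} as an open problem and does not attempt to prove. There is therefore no ``paper's own proof'' to compare against.

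Your proposal is not a proof either, and to your credit you say so explicitly: you outline the Hodge-theoretic/VHS strategy (Deligne semisimplicity, flatness of a finite-monodromy sublocal system, positivity of the Hodge bundle, Arakelov-type inequalities) and then correctly identify the gap --- the required nondegeneracy of the Kodaira--Spencer pairing on every $G$-isotypic piece is exactly what is not known in general, and is only established under restrictive hypotheses (small rank, large genus) in work of Landesman--Litt. You also correctly note the equivalence with Ivanov's conjecture, which is another way of seeing that a complete proof is out of reach. So what you have written is an informed survey of why the conjecture is hard, not a proof attempt with a fixable error; the ``missing idea'' is the entire content of an open problem.

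One small comment on presentation: since the task was to prove the displayed statement, it would have been cleaner to say at the outset that the statement is a conjecture and no proof is claimed, rather than structuring the text as a proof-by-contradiction that collapses in the final paragraph.
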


It turns out \cite[Theorem C]{PutmanWieland} that Conjecture \ref{PWconj} is essentially equivalent to the classical Ivanov's conjecture \cite{Ivanov} on vanishing of the first virtual Betti number of the mapping class group (see also \cite[Problem 2.11.A]{Kirby}).
It is also important to note that it is an open question whether $\Mod(\S)$ has Kazhdan's property (T), which is a stronger condition than vanishing of the first virtual Betti number.
In this context, Theorem \ref{mainth} claims that the action of any single element $\phi \in \Mod(\S)$ is not enough to prove Conjecture \ref{PWconj}.

Besides general questions on Prym representations, it is natural to study $\phi$-action on $\H_1(\widetilde{\S}, \Q)$ for a fixed element $\phi \in \Mod(\S)$. Koberda \cite{Koberda, KoberdaPhD} showed that for every nontrivial $\phi \in \Mod(\S)$ there exists a finite characteristic cover $\widetilde{\S} \to \S$ such that each lift of $\phi$ acts nontrivially on $\H_1(\widetilde{\S}, \Q)$. Liu \cite{Liu} proved the well-known McMullen's conjecture claiming that every pseudo-Anosov $\phi \in \Mod(\S)$ lifts to some finite cover $\widetilde{\S} \to \S$ with the spectral radius of the induced action on $\H_1(\widetilde{\S}, \Q)$ strictly grater than one (see also \cite{Hadari15}). Using a completely different approach, Hadari \cite{Hadari20} proved the similar result for the surfaces with at least one boundary component.
These results imply that for any infinite order element $\phi \in \Mod(\S)$ there exists a finite cover $\widetilde{\S} \to \S$ such that $\phi$ lifts to a mapping class acting on $\H_1(\widetilde{\S}, \Q)$ with infinite order. In some sense, Theorem \ref{mainth} answers the opposite question to this.

One more important aspect of Prym representations is \textit{simple closed curve homology} $\H_1^{scc}(\widetilde{\S}, \Q) \subseteq \H_1(\widetilde{\S}, \Q)$ defined as the span of the homology classes of all loops on $\widetilde{\S}$ projecting to multiples of simple curves on $\S$; this definition was initially mentioned by Farb and Hensel \cite{FarbHensel}. There exist branched (Malestein and Putman, \cite{Malestein}) and unbranched (Klukowski, \cite{Klukowski}) covers $\widetilde{\S} \to \S$ with $\H_1^{scc}(\widetilde{\S}, \Q) \neq \H_1(\widetilde{\S}, \Q)$. Recently, Boggi, Putman and Salter \cite{Boggi} proved that $\H_1^{scc}(\widetilde{\S}, \Q)$ is a symplectic subspace of $\H_1(\widetilde{\S}, \Q)$. Moreover, they showed that the analogue of Conjecture \ref{PWconj} is true for $\H_1^{scc}(\widetilde{\S}, \Q)$. Namely, for any characteristic cover $\widetilde{\S} \to \S$ and for each nonzero homology class  $v \in \H_1^{scc}(\widetilde{\S}, \Q)$ the $\Mod(\S)$-orbit of $v$ is infinite.

In view of \cite{Boggi} and Theorem \ref{mainth}, it is natural to ask if $\dim (\H_1^{f.o.}(\widetilde{\S}, \Q)^\phi  \cap \H_1^{scc}(\widetilde{\S}, \Q))$ can be arbitrary large for a fixed $\phi \in \Mod(\S)$. For periodic $\phi$ this claim is straightforward, and for reducible $\phi$ this more or less immediately follows from our proof (see Lemmas \ref{redN} and \ref{redS}). However, the author does not know if it is true for pseudo-Anosov elements $\phi \in \Mod(\S)$.

\subsection{Outline of the paper}

Our proof of Theorem \ref{mainth} uses the Nielsen-Thurston classification. We consider periodic, reducible and pseudo-Anosov elements $\phi$ separately. If $\phi$ is periodic then $\H_1^{f.o.}(\widetilde{\S}, \Q)^\phi = \H_1(\widetilde{\S}, \Q)$ and there is nothing to prove.
Reducible and pseudo-Anosov classes are considered in Sections \ref{Sec3} and \ref{Sec4}, respectively, after some preliminaries given in Section \ref{Sec2}. For reducible $\phi$ we explicitly construct covers $\widetilde{\S} \to \S$ with large  $\dim \H_1^{f.o.}(\widetilde{\S}, \Q)^\phi$. The proof in the pseudo-Anosov case is based on the deep theorem of Agol \cite{Agol13} on virtual Betti numbers of hyperbolic $3$-manifolds.

\subsection{Acknowledgements}
The author is grateful to his supervisor Ursula~Hamenstädt for useful discussions, constant attention to this work, and important suggestions on improving the paper. The author would like to thank A. Ng and C. Rudd for fruitful conversations one of which inspired him to consider the central question addressed in this work.

\section{Preliminaries} \label{Sec2}

First let us recall the Nielsen-Thurston classification \cite{Thurston} (see also \cite[Theorem 13.2]{Primer}) of the mapping class group elements. 
Let $\phi \in \Mod(\S)$ be an arbitrary element. Then precisely one of the following statements holds.
	
    (1) $\phi$ is periodic, i.e. $\phi^n = \id$ for some integer $n \geq 1$.
	
	(2) $\phi$ is reducible, i.e. $\phi$ has infinite order and $\phi^n(\alpha) = \alpha$ for some integer $n \geq 1$ and some essential simple closed curve $\alpha$ on $\S$.
	
	(3) $\phi$ is pseudo-Anosov, see  \cite[Section 13.2.3]{Primer} for the standard definition.

For any $\phi \in \Mod(\S)$ we can define the mapping torus  $M_\phi$ of $\phi$, which is a closed orientable $3$-manifold. Then we have the following classical result of Thurston \cite{Thurston}, which we will use instead of the standard definition of pseudo-Anosov elements.

\begin{theorem} \cite[Theorem 13.4]{Primer} \label{hyp}
	Let $\phi \in \Mod(\S)$ be any element. Then $M_\phi$ admits a hyperbolic metric if and only if $\phi$ is pseudo-Anosov.
\end{theorem}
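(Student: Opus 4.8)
The statement to prove is Theorem~\ref{hyp}: $M_\phi$ admits a hyperbolic metric if and only if $\phi$ is pseudo-Anosov. Since this is exactly Thurston's hyperbolization theorem for surface bundles over the circle, I will not reprove it from scratch; instead I outline the structure of a proof, indicating which classical inputs are invoked at each stage and where the genuine difficulty lies. Throughout, ``hyperbolic'' means ``admits a complete finite-volume Riemannian metric of constant curvature $-1$''; since $M_\phi$ is closed (as $\S$ is closed), this is equivalent to admitting any hyperbolic metric, and such a metric is unique up to isometry by Mostow rigidity.

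First I would dispose of the easy direction, i.e. that if $M_\phi$ is hyperbolic then $\phi$ is pseudo-Anosov, by ruling out the other two Nielsen-Thurston types. If $\phi$ is periodic, say $\phi^n = \id$, then $M_{\phi^n}$ is a finite cover of $M_\phi$ and is homeomorphic to $\S \times S^1$, which contains an incompressible torus (a fiber $\{pt\}\times S^1$ thickened appropriately — more precisely the torus $\gamma \times S^1$ for an essential simple closed curve $\gamma \subset \S$), hence is not hyperbolic; since a finite cover of a hyperbolic manifold is hyperbolic, $M_\phi$ is not hyperbolic. If $\phi$ is reducible, choose $n \geq 1$ and an essential simple closed curve $\alpha$ with $\phi^n(\alpha) = \alpha$; then in $M_{\phi^n}$ (again a finite cover of $M_\phi$) the torus swept out by $\alpha$ under the mapping-torus construction is an essential embedded torus, so $M_{\phi^n}$ is not atoroidal and in particular not hyperbolic, hence neither is $M_\phi$. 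For both cases one should check the relevant torus is genuinely essential (incompressible and not boundary-parallel), which follows because $\alpha$ is essential on $\S$ and $\pi_1$ of the torus injects into $\pi_1(M)$ via the fibration exact sequence $1 \to \pi_1(\S) \to \pi_1(M) \to \Z \to 1$.

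The substantive direction is that $\phi$ pseudo-Anosov implies $M_\phi$ hyperbolic; this is the heart of Thurston's theorem and I would cite it rather than reconstruct it, but the outline of the argument is: (i) verify that $M_\phi$ is irreducible (every embedded $2$-sphere bounds a ball) — this uses that $\S$ is aspherical and the fibration structure, so $\pi_2(M_\phi) = 0$; (ii) verify that $M_\phi$ is atoroidal, i.e. every $\pi_1$-injective torus is boundary-parallel (vacuously, since $M_\phi$ is closed, this means there is no essential torus at all) — this is exactly the place where the pseudo-Anosov hypothesis is used, via the fact that a pseudo-Anosov map has no periodic essential simple closed curves and no periodic finite families of disjoint such curves, so the suspension flow admits no invariant incompressible torus; (iii) observe $M_\phi$ is a Haken manifold (it fibers over $S^1$, so it contains an incompressible surface, namely the fiber $\S$) with infinite $\pi_1$ and is not a Seifert-fibered space (a closed Haken Seifert-fibered space has a horizontal or vertical incompressible surface structure incompatible with being atoroidal unless it is covered by a torus bundle, which is excluded since $\chi(\S) < 0$); (iv) apply Thurston's hyperbolization theorem for Haken manifolds — equivalently, Thurston's geometrization theorem in the fibered case — to conclude $M_\phi$ admits a hyperbolic structure. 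The \emph{main obstacle}, and the reason one cites rather than proves, is step (iv): Thurston's hyperbolization of fibered $3$-manifolds is a deep theorem whose proof (the ``double limit theorem'' and the skinning-map fixed-point argument, or alternatively Otal's proof via $\R$-trees) is far beyond what could be sketched here. The only part requiring real care in an elementary writeup is step (ii) — translating ``$\phi$ pseudo-Anosov'' into ``$M_\phi$ atoroidal'' — for which the cleanest route is the contrapositive already used in the easy direction: an essential torus in $M_\phi$ would, after passing to a finite cover to make the fibration monodromy fix the isotopy class, produce a $\phi^n$-invariant essential simple closed curve on $\S$, forcing $\phi$ to be periodic or reducible by the Nielsen-Thurston trichotomy, a contradiction.
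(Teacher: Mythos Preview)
The paper does not prove this statement at all: Theorem~\ref{hyp} is stated with a citation to \cite[Theorem 13.4]{Primer} and used as a black box, with no accompanying argument. Your proposal therefore goes well beyond what the paper does---you supply a correct outline of Thurston's theorem (the easy direction via essential tori in finite covers, the hard direction via irreducibility, atoroidality, Haken-ness, and Thurston's hyperbolization for fibered manifolds), whereas the paper simply invokes the result. There is nothing to compare on the paper's side; your sketch is accurate and appropriately flags the double-limit/skinning-map step as the genuine depth that must be cited rather than reproved.
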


In this section we also prove some facts about the action of the mapping class group on surface covers. A first step is to define $\H_1^{f.o.}(\S, \Q)^{\phi}$ for all finite regular covers, not only for characteristic ones.

Let $\widetilde{\S} \to \S$ be a finite regular cover and let $K = \pi_1(\widetilde{\S}) \lhd \pi_1(\S)$ be its fundamental group. Then an element $\phi \in \Mod(\S)$ lifts to $\Mod(\widetilde{\S})$ if and only if $K$ is invariant under the action of $\phi \in  \Mod(\S) \cong \Out(\pi_1(\S))$. However, for every $\phi \in \Mod(\S)$ there exists $k > 0$ such that $K$ is $\phi^k$-invariant, so the action of $\phi^k$ on $\H_1(K, \Q)$ is well defined. Therefore, one can consider the linear subspace $\H_1^{f.o.}(K, \Q)^\phi \subseteq \H_1(K, \Q)$ consisting of all homology classes with finite $\phi^k$-orbit. 

In this case $\phi^k$ also has a lift to $\Mod(\widetilde{\S})$ defined up to the action of the deck group $\pi_1(\S) / K$. Therefore, the action of $\phi^k$ on $\H_1(\widetilde{\S}, \Q)$ is also defined up to the action of the finite deck group, and we similarly define the linear subspace $\H_1^{f.o.}(\widetilde{\S}, \Q)^\phi \subseteq \H_1(\widetilde{\S}, \Q)$ consisting of all homology classes with finite $\phi^k$-orbit. This definition of $\H_1^{f.o.}(\widetilde{\S}, \Q)^{\phi}$ agrees with the one for characteristic covers.
We  have $\H_1^{f.o.}(K, \Q)^\phi  \cong \H_1^{f.o.}(\widetilde{\S}, \Q)^\phi$ (not canonically).

Note that these definitions do not depend on the choice of $k$ since the property of orbit (in)finiteness is preserved under passing to finite index subgroups. By the same reason we have $\H_1^{f.o.}(\widetilde{\S}, \Q)^{\phi} = \H_1^{f.o.}(\widetilde{\S}, \Q)^{\phi^n}$ for any finite regular cover $\widetilde{\S} \to \S$ and any integer $n \neq 0$.

The next lemma claims that the dimension of $\H_1^{f.o.}(K, \Q)^\phi$ can not be  decreased via passing to finite index subgroups. A related statement in a different setting appears in \cite[Lemma 2.3]{Krop}.

\begin{lemma} \label{subgr}
		Let $K', K \lhd \pi_1(\S)$ be finite index normal subgroups such that $K' \lhd K$. Then $\dim \H_1^{f.o.}(K', \Q)^\phi \geq \dim \H_1^{f.o.}(K, \Q)^\phi$.
\end{lemma}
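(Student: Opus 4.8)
The plan is to produce an explicit injection $\H_1^{f.o.}(K, \Q)^\phi \hookrightarrow \H_1^{f.o.}(K', \Q)^\phi$, namely the transfer homomorphism attached to the finite-index inclusion $K' \hookrightarrow K$. Over $\Q$ the transfer $t \colon \H_1(K, \Q) \to \H_1(K', \Q)$ is injective, because composing it with the map induced by $K' \hookrightarrow K$ gives multiplication by $[K : K'] \neq 0$. So everything reduces to showing that $t$ carries $\H_1^{f.o.}(K,\Q)^\phi$ into $\H_1^{f.o.}(K',\Q)^\phi$.

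First I would make the ``$\phi^k$-action up to deck group'' precise in a way convenient for both covers simultaneously. Choose $k \geq 1$ so that $\phi^k \in \Out(\pi_1(\S))$ fixes the conjugacy classes of both $K$ and $K'$; since $K, K' \lhd \pi_1(\S)$, any automorphism $\Psi$ representing $\phi^k$ then satisfies $\Psi(K) = K$ and $\Psi(K') = K'$. Set $G := \langle \mathrm{Inn}(\pi_1(\S)), \Psi \rangle \leq \Aut(\pi_1(\S))$. Every element of $G$ preserves both $K$ and $K'$, so restriction gives compatible homomorphisms $G \to \Aut(K)$ and $G \to \Aut(K')$, hence linear actions $\rho_K \colon G \to \mathrm{GL}(\H_1(K, \Q))$ and $\rho_{K'} \colon G \to \mathrm{GL}(\H_1(K', \Q))$. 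Because inner automorphisms act trivially on group homology, $\mathrm{image}(\rho_K)$ is precisely the finite group generated by the deck action of $\pi_1(\S)/K$ together with the transformation induced by $\Psi$; thus by definition $\H_1^{f.o.}(K, \Q)^\phi$ is exactly the set of $v$ with finite $\mathrm{image}(\rho_K)$-orbit, and likewise for $K'$.

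The heart of the matter is the functoriality of the transfer: for every $g \in G$ the square relating $\rho_K(g)$, $\rho_{K'}(g)$ and $t$ should commute, i.e. $t \circ \rho_K(g) = \rho_{K'}(g) \circ t$. For $g$ induced by $\Psi$ this is the naturality of the transfer with respect to an automorphism preserving the subgroup; for $g$ inner it is the naturality with respect to deck transformations of $\widetilde\S' \to \widetilde\S$ and of $\widetilde\S \to \S$. Topologically, both are instances of the single fact that the covering-space transfer is natural for pullback squares, the cover $\widetilde\S' \to \widetilde\S$ being preserved (up to isomorphism of covers) by the relevant self-homeomorphisms of $\widetilde\S$. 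Granting this, if $v \in \H_1^{f.o.}(K, \Q)^\phi$ then $\mathrm{image}(\rho_K)\cdot v$ is finite, hence so is $\mathrm{image}(\rho_{K'})\cdot t(v) = t\big(\mathrm{image}(\rho_K)\cdot v\big)$, i.e. $t(v) \in \H_1^{f.o.}(K', \Q)^\phi$. Combined with the injectivity of $t$, this gives $\dim \H_1^{f.o.}(K', \Q)^\phi \geq \dim \H_1^{f.o.}(K, \Q)^\phi$.

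I expect the only real obstacle to be the bookkeeping forced by the deck-group ambiguity in the definition of $\H_1^{f.o.}$: one must ensure the transfer intertwines \emph{both} the maps induced by $\Psi$ \emph{and} the two deck-group actions, and that these are compatible enough to be packaged into the single group $G$ acting on both sides. Once this is arranged — and it reduces to the standard naturality of the transfer — the rest is immediate.
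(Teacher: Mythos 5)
Your proof is correct, but it runs in the opposite direction from the paper's. The paper uses the surjection $\iota_*\colon \H_1(K',\Q)\twoheadrightarrow\H_1(K,\Q)$ induced by the inclusion: it first chooses $n$ so that $\H_1^{f.o.}(K,\Q)^\phi$ coincides with the genuine invariant subspace $\H_1(K,\Q)^{\phi^{kn}}$, and then applies a general claim (Fact~\ref{fact}) that a surjective equivariant map of $\Q$-representations restricts to a surjection on invariants, proved via the identifications $V^G\cong V_G$. Your transfer argument is, if anything, the more robust of the two: the composite $V^G\hookrightarrow V\twoheadrightarrow V_G$ is an isomorphism only when the action is semisimple at the eigenvalue $1$ (it fails for a unipotent $\Z$-action), and semisimplicity of $\phi^{kn}$ on $\H_1(K',\Q)$ is not automatic here; what actually makes that step work in this application is precisely an equivariant splitting of $\iota_*$ up to the scalar $[K:K']$ --- i.e.\ your transfer. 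In exchange for invoking naturality of the transfer under automorphisms of $K$ preserving $K'$ (which is indeed standard, visible either on coset representatives or as naturality of the covering-space transfer under lifted homeomorphisms), you obtain an explicit equivariant embedding $\H_1^{f.o.}(K,\Q)^\phi\hookrightarrow\H_1^{f.o.}(K',\Q)^\phi$ rather than a bare dimension inequality, and you never need to pass to the power $\phi^{kn}$ whose invariants exhaust the finite-orbit subspace. Your packaging of the deck-group ambiguity into the single group $G=\langle\mathrm{Inn}(\pi_1(\S)),\Psi\rangle$ is also sound: the deck-group images are finite and normalized by $\rho_K(\Psi)$, so finite orbit under one lift of $\phi^k$ is equivalent to finite orbit under the whole image of $\rho_K$, which matches the paper's definition of $\H_1^{f.o.}$.
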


For a group $G$ acting on a vector space $V$, we denote by $V^G$ and $V_G$ the spaces of invariants and coinvariants, respectively.
Lemma \ref{subgr} follows from the following general algebraic fact.

\begin{fact} \label{fact}
	Let $G$ be a discrete group and let $V, W$ be finite dimensional vector spaces over $\Q$ equipped with a $G$-action. Then for any surjective $G$-equivariant map $f: V \twoheadrightarrow W$ its restriction to $G$-invariants $f|_{V^G} : V^G \to W^G$ is also surjective.
\end{fact}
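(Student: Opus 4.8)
The plan is to reduce the statement to producing a $G$-equivariant $\Q$-linear section of $f$. Concretely, suppose $s\colon W \to V$ is a $\Q$-linear map with $f\circ s = \mathrm{id}_W$ and $g\,s(w) = s(gw)$ for all $g\in G$ and $w\in W$. Then for any $w \in W^G$ the vector $v := s(w)$ satisfies $f(v) = w$ and $gv = g\,s(w) = s(gw) = s(w) = v$ for every $g\in G$, so $v \in V^G$; hence $f|_{V^G}\colon V^G \to W^G$ is onto. Thus the whole problem reduces to constructing the equivariant section $s$ of the short exact sequence $0 \to \ker f \to V \xrightarrow{\ f\ } W \to 0$ of $\Q[G]$-modules.

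To build $s$ I would first choose any $\Q$-linear section $s_0\colon W\to V$ of $f$, which exists because $f$ is surjective. For $g\in G$ the conjugate $g\,s_0\,g^{-1}$ is again a section of $f$ (here one uses that $f$ is $G$-equivariant: $f\circ(g\,s_0\,g^{-1}) = g\circ(f\circ s_0)\circ g^{-1} = \mathrm{id}_W$), so $g\,s_0\,g^{-1} - s_0$ takes values in $\ker f$, and $g\mapsto g\,s_0\,g^{-1}-s_0$ is a $1$-cocycle on $G$ valued in $\mathrm{Hom}_\Q(W,\ker f)$ whose class in $\H^1\!\big(G,\mathrm{Hom}_\Q(W,\ker f)\big)$ is precisely the obstruction to promoting $s_0$ to a $G$-equivariant section. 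I would then kill this obstruction by averaging: set
$$
s := \frac{1}{|G|}\sum_{g\in G} g\,s_0\,g^{-1}.
$$
A direct check gives $f\circ s = \mathrm{id}_W$ and $h\,s\,h^{-1} = s$ for all $h\in G$ (reindex the sum by $g\mapsto hg$), so $s$ is the desired equivariant section and the reduction of the first paragraph finishes the argument. Equivalently, one can phrase this via Maschke's theorem: $\ker f$ has a $G$-invariant complement $V'\subseteq V$, the restriction $f|_{V'}\colon V'\to W$ is an isomorphism of $G$-modules, and it carries $(V')^G\subseteq V^G$ isomorphically onto $W^G$.

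I do not expect a serious obstacle here; the only point that needs care is that the averaging — equivalently, semisimplicity of $\Q[G]$, equivalently the vanishing of the $\H^1$ above — requires $G$ to be \emph{finite}, so one applies the fact with $G$ finite. This is exactly the setting in which it is used: $G$ will be the finite group through which the relevant action of $\phi$ (or of a deck group) on the finite-dimensional homology factors. No special properties of the homology of surface covers enter — only characteristic-zero coefficients and finiteness of $G$ are needed.
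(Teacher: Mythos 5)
Your averaging construction is correct, and for finite $G$ it is essentially the same mechanism as the paper's proof: the paper identifies $V^G$ with the coinvariants $V_G$ via the composite $V^G\hookrightarrow V\twoheadrightarrow V_G$ and then uses right-exactness of coinvariants, and that identification is exactly what your idempotent $e=\tfrac{1}{|G|}\sum_{g\in G}g$ provides (it splits $V$ as $V^G\oplus\ker e$ with $\ker e=\operatorname{span}\{gv-v\}$). You are also right that finiteness of $G$ is genuinely needed and is not an artifact of your method: the Fact as stated, for an arbitrary discrete group, is false. Let $G=\Z$ act on $V=\Q^2$ by the transvection $t(x,y)=(x+y,y)$, let $W=\Q$ carry the trivial action, and let $f(x,y)=y$. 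Then $f$ is surjective and equivariant and $W^G=W$, but $V^G=\Q\cdot(1,0)$ and $f|_{V^G}=0$. The same example shows that the ``canonical isomorphism $V^G\cong V_G$'' invoked in the paper's proof does not exist for infinite $G$, so the paper's argument carries the identical hidden hypothesis.

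Where your proposal errs is in the closing guess about how the Fact is used. In the proof of Lemma \ref{subgr} it is applied with $G=\Z$ acting by powers of $\phi^{kn}$ on $\H_1(K',\Q)$ and $\H_1(K,\Q)$, and this action need not factor through a finite group (a Dehn twist already acts on homology by a transvection of infinite order). So neither your argument nor the paper's literal text covers the case in which the Fact is actually invoked. The invocation can nevertheless be repaired, because Lemma \ref{subgr} only needs the inequality $\dim V^G\geq\dim W^G$, which does hold for $G=\Z$: writing $t$ for the generator, $\dim V^G=\dim\ker(t-1)=\dim\operatorname{coker}(t-1)=\dim V_G$ by rank--nullity, coinvariants are right exact, and likewise for $W$, so $\dim V^G=\dim V_G\geq\dim W_G=\dim W^G$. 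In summary: restated with $G$ finite, the Fact is true and your proof is complete; to serve the paper's application one should instead prove the dimension inequality for $G=\Z$ as just indicated.
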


\begin{proof}
	We have the canonical isomorphisms $V^G \cong V_G$ and $W^G \cong W_G$ induced by the inclusions  $V^G \hookrightarrow V$ and $W^G \hookrightarrow W$. The induced map on coinvariants  $f': V_G \twoheadrightarrow W_G$ is surjective since $f$ is surjective.
	Therefore, $f|_{V^G}$ can be considered as a composite map
	$$f|_{V^G} : V^G \cong V_G \twoheadrightarrow W_G \cong W^G,$$
	which is surjective.
\end{proof}

\begin{proof}[Proof of Lemma \ref{subgr}]
	First let us choose an integer $k > 0$ such that both $K$ and $K'$ are $\phi^k$-invariant. Then the action of $\phi^k$ on $\H_1(K, \Q)$ is well-defined, so one can consider the subspaces $\H_1(K, \Q)^{\phi^{kn}} \subseteq \H_1(K, \Q)$ of $\phi^{kn}$-invariants for $n \in \mathbb{Z}$.
	Since $\H_1^{f.o.}(K, \Q)^\phi$ is finite dimensional, it follows that there exists an integer $n > 1$ such that 
	$\H_1^{f.o.}(K, \Q)^\phi = \H_1(K, \Q)^{\phi^{kn}}$. Indeed, it suffices to take $n$ equal to the least common multiple of the $\phi^k$-orbit cardinalities of basis elements of $\H_1^{f.o.}(K, \Q)^\phi$ (for some basis).
	
	Let $\iota: K' \hookrightarrow K$ be the inclusion map. Since $\iota$ comes from a finite covering of surfaces, then the induced map $\iota_*: \H_1(K', \Q) \twoheadrightarrow \H_1(K, \Q)$ is surjective. Hence Fact \ref{fact} applied to the map $\iota_*$ and the group $\Z$ acting by powers of $\phi^{kn}$ implies that we have the surjection $\H_1(K', \Q)^{\phi^{kn}} \twoheadrightarrow \H_1(K, \Q)^{\phi^{kn}}$. Since
	$$\H_1(K', \Q)^{\phi^{kn}} \subseteq \H_1^{f.o.}(K', \Q)^\phi,$$
	we obtain
	$$\dim \H_1^{f.o.}(K', \Q)^\phi \geq \dim \H_1(K', \Q)^{\phi^{kn}} \geq  \dim \H_1(K, \Q)^{\phi^{kn}} = \dim \H_1^{f.o.}(K, \Q)^\phi.$$
	This concludes the proof.
\end{proof}

We obtain the following corollary, which allows us to replace characteristic covers in the statement of Theorem \ref{mainth} by regular ones.

\begin{corollary} \label{cor1}
	Let $K \lhd \pi_1(\S)$ be a finite index normal subgroup. Then there exists a finite index characteristic subgroup $K' \lhd \pi_1(\S)$ such that $K' \lhd K$ and $\dim \H_1^{f.o.}(K', \Q)^\phi \geq \dim \H_1^{f.o.}(K, \Q)^\phi$.
\end{corollary}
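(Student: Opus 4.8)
The plan is to derive Corollary \ref{cor1} directly from Lemma \ref{subgr} by exhibiting a finite index characteristic subgroup $K' \lhd \pi_1(\S)$ contained in the given finite index normal subgroup $K$. Once such a $K'$ is found, Lemma \ref{subgr} immediately yields $\dim \H_1^{f.o.}(K', \Q)^\phi \geq \dim \H_1^{f.o.}(K, \Q)^\phi$, since $K'$ is a finite index normal subgroup with $K' \lhd K$, and the corollary follows.

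So the only real content is the following standard group-theoretic fact: every finite index subgroup of a finitely generated group contains a finite index characteristic subgroup. First I would recall that $\pi_1(\S)$ is finitely generated, so for each integer $m \geq 1$ there are only finitely many subgroups of index $m$. I would then set $K'$ to be the intersection of all subgroups of $\pi_1(\S)$ of index equal to $[\pi_1(\S) : K]$ (or, more cleanly, the intersection of all subgroups of index at most $[\pi_1(\S):K]$). This intersection is a finite intersection of finite index subgroups, hence itself of finite index; it is contained in $K$; and it is characteristic because any automorphism of $\pi_1(\S)$ permutes the subgroups of a given index and therefore preserves their intersection. This is the verification-of-a-known-fact step and presents no genuine obstacle.

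Finally I would assemble the argument: having produced $K' \lhd \pi_1(\S)$ characteristic of finite index with $K' \subseteq K$, we automatically have $K' \lhd K$, so Lemma \ref{subgr} applies and gives the desired inequality. I do not anticipate any serious difficulty here; the statement is essentially a packaging of Lemma \ref{subgr} together with the existence of characteristic refinements of finite index subgroups, and the main point to be careful about is simply to note that $\pi_1(\S)$ is finitely generated so that the relevant intersection is finite.
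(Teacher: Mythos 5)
Your proposal is correct and follows essentially the same route as the paper: produce a finite index characteristic subgroup $K' \lhd \pi_1(\S)$ contained in $K$ and then apply Lemma \ref{subgr}. The only difference is that you spell out the standard construction of $K'$ (intersecting the finitely many subgroups of a given index, using finite generation of $\pi_1(\S)$), which the paper simply asserts.
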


\begin{proof}
	Since $K$ has finite index in $\pi_1(\S)$, then there exists a finite index characteristic subgroup $K' \lhd \pi_1(\S)$ such that $K' \lhd K$. Then by Lemma \ref{subgr} we have $\dim \H_1^{f.o.}(K', \Q)^\phi \geq \dim \H_1^{f.o.}(K, \Q)^\phi$.
\end{proof}

\section{Reducible case} \label{Sec3}

In this section we prove Theorem \ref{mainth} for a reducible $\phi$. In this case we have $\phi^k(\alpha) = \alpha$ for some integer $k > 0$ and some essential simple closed curve $\alpha$ on $\S$. We consider the cases of nonseparating and separating curves $\alpha$ separately. In both cases we explicitly construct regular covers $\widetilde{\S} \to \S$ with $\dim \H_1^{f.o.}(\widetilde{\S}, \Q)^\phi$ bounded from below. 

By a simple closed curve we mean a curve without orientation unless explicitly stated otherwise.
For an oriented simple closed curve $\alpha$ on $\S$ we denote by $[\alpha] \in \H_1(\S, \Q)$ its homology class. First we need the following result.
 
\begin{lemma} \label{lem}
	Let $\phi \in \Mod(\S)$ and let $\alpha$ be a simple closed curve such that $\phi(\alpha) = \alpha$. Let $p: \widetilde{\S} \to \S$ be a finite regular cover. Denote by $\widetilde{\alpha}_1, \dots, \widetilde{\alpha}_d$ all lifts of (multiples of) $\alpha$ to simple closed curves on $\widetilde{\S}$. Then
	$$\dim \H_1^{f.o.}(\widetilde{\S}, \Q)^\phi \geq \dim  \span \langle [\widetilde{\alpha}_1], \dots, [\widetilde{\alpha}_d] \rangle_\Q $$
	after choosing arbitrary orientations on $\widetilde{\alpha}_1, \dots, \widetilde{\alpha}_d$, where the span on the right hand side is taken inside $\H_1(\widetilde{\S}, \Q)$.
\end{lemma}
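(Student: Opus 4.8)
The plan is to prove the stronger statement that the finite-dimensional subspace
$V := \span\langle [\widetilde\alpha_1], \dots, [\widetilde\alpha_d]\rangle_\Q$
of $\H_1(\widetilde\S, \Q)$ is entirely contained in $\H_1^{f.o.}(\widetilde\S, \Q)^\phi$; since the latter is by definition a linear subspace of $\H_1(\widetilde\S, \Q)$, the inequality $\dim \H_1^{f.o.}(\widetilde\S, \Q)^\phi \geq \dim V$ follows immediately. Note that $V$ does not depend on the chosen orientations of the $\widetilde\alpha_i$, so it suffices to treat one choice.

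First I would pass to a power of $\phi$: choose $k > 0$ such that $K := \pi_1(\widetilde\S)$ is invariant under $\phi^k$, so that $\phi^k$ lifts to $\Mod(\widetilde\S)$, and recall from Section \ref{Sec2} that $\H_1^{f.o.}(\widetilde\S, \Q)^\phi = \H_1^{f.o.}(\widetilde\S, \Q)^{\phi^k}$. The key geometric observation is that \emph{any} lift of $\phi^k$ (and likewise any deck transformation) permutes the curves $\widetilde\alpha_1, \dots, \widetilde\alpha_d$ up to isotopy. Indeed, since $\phi(\alpha) = \alpha$ we have $\phi^k(\alpha) = \alpha$, so $\phi^k$ is represented by a homeomorphism $f \colon \S \to \S$ with $f(\alpha) = \alpha$ setwise; as $f_*(K) = K$ it lifts to $\widetilde f \colon \widetilde\S \to \widetilde\S$ with $p \circ \widetilde f = f \circ p$. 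The curves $\widetilde\alpha_1, \dots, \widetilde\alpha_d$ are, up to isotopy, the connected components of the $1$-submanifold $p^{-1}(\alpha)$, and $\widetilde f\bigl(p^{-1}(\alpha)\bigr) = p^{-1}\bigl(f(\alpha)\bigr) = p^{-1}(\alpha)$, so $\widetilde f$ permutes them: $\widetilde f(\widetilde\alpha_i)$ is isotopic to $\widetilde\alpha_{\sigma(i)}$ for some $\sigma \in S_d$. The same applies to any deck transformation $\delta$ because $p \circ \delta = p$.

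The remaining steps are routine. The induced automorphism $(\phi^k)_* = \widetilde f_*$ of $\H_1(\widetilde\S, \Q)$ sends each $[\widetilde\alpha_i]$ to $\pm [\widetilde\alpha_{\sigma(i)}]$, the signs absorbing the orientation choices, so $V$ is $\widetilde f_*$-invariant; moreover, via the $\widetilde f_*$-equivariant surjection $\Q^d \twoheadrightarrow V$ sending $e_i \mapsto [\widetilde\alpha_i]$, the operator $\widetilde f_*|_V$ is intertwined with a signed $d \times d$ permutation matrix. Since such matrices form a finite group, $\widetilde f_*|_V$ has finite order; hence every vector of $V$ has finite orbit under $\langle \widetilde f \rangle$, and --- as the deck group is finite and is normalized by $\widetilde f$ --- also under the group generated by all lifts of $\phi^k$ together with the deck group. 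Therefore $V \subseteq \H_1^{f.o.}(\widetilde\S, \Q)^\phi$, as needed.

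There is no real obstacle here; the only delicate point is the bookkeeping around the two ambiguities in the definition of $\H_1^{f.o.}(\widetilde\S, \Q)^\phi$ --- that $\phi$ itself need not lift, and that a lift is defined only up to the deck group --- together with the verification that the conclusion is independent of these choices. This works precisely because the deck group, like the lifts of $\phi^k$, permutes the finite set $\{\widetilde\alpha_1, \dots, \widetilde\alpha_d\}$ and hence acts on $V$ through a finite group.
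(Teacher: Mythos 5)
Your proposal is correct and follows essentially the same route as the paper: choose $k$ so that $\phi^k$ lifts, observe that any lift (and any deck transformation) permutes the $\widetilde{\alpha}_i$ since they are the components of $p^{-1}(\alpha)$ and $\phi^k(\alpha)=\alpha$, conclude that the action on the span is by signed permutations and hence every element of the span has finite orbit. Your write-up is somewhat more explicit than the paper's about the ambiguity coming from the deck group and about why finiteness of the orbits of a spanning set propagates to the whole span, but these are refinements of detail, not a different argument.
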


\begin{proof}
	Let us show that
	\begin{equation} \label{eq}
	\span \langle [\widetilde{\alpha}_1], \dots, [\widetilde{\alpha}_d] \rangle_\Q \subseteq  \H_1^{f.o.}(\widetilde{\S}, \Q)^\phi.
	\end{equation}
	Take an integer $k$ such that $\phi^k$ lifts to $\Mod(\widetilde{\S})$. For each $i = 1, \dots, d$ we have
	$$p(\phi^k(\widetilde{\alpha_i})) = \phi^k(p(\widetilde{\alpha_i})) = \phi^k(\alpha) = \alpha,$$
	so we obtain $\phi^k(\widetilde{\alpha_i}) \in \{\widetilde{\alpha}_1, \dots, \widetilde{\alpha}_d\}$ and hence $\phi^k([\widetilde{\alpha_i}]) \in \{\pm [\widetilde{\alpha}_1], \dots,\pm [\widetilde{\alpha}_d]\}$. Therefore, the $\phi^k$-orbit of $[\widetilde{\alpha}_i]$ is finite. Since the space $\span \langle [\widetilde{\alpha}_1], \dots, [\widetilde{\alpha}_d] \rangle_\Q$ has a basis consisting of the the homology classes $[\widetilde{\alpha}_i]$ with finite $\phi$-orbits, then (\ref{eq}) follows. This immediately implies the claim of the lemma.
\end{proof}

Let us consider the case where some nonzero power of $\phi \in \Mod(\S)$ stabilizes a nonseparating curve $\alpha$ on $\S$. Since $H_1^{f.o.}(\widetilde{\S}, \Q)^{\phi} = \H_1^{f.o.}(\widetilde{\S}, \Q)^{\phi^k}$ for any integer $k \neq 0$, we can replace $\phi$ by $\phi^k$ and assume that $\phi(\alpha) = \alpha$.

\begin{figure}[h]
	\scalebox{1}{
		\begin{tikzpicture}
			
			
			
			\draw[dotted, very thick] (-2 + 0.71, 2 + 0.71) arc (90+22.5:135+22.5:2.63);

			\node[] at (0, 1.41*5.3+0.5) {$2\pi/N$};
			
			\draw[] (5.3, 5.3) arc (45:135:1.41*5.3);
			\draw[] (5.3, 5.3) to (5.3-0.5, 5.3);
			\draw[] (5.3, 5.3) to (5.3, 5.3+0.5);
			
			\draw[] (1, 2.83) to (1, 3 + 2.83);
			\draw[] (-1, 2.83) to (-1, 3 + 2.83);
			\draw[] (1, 3 + 2.83) arc (0:180:1);

			\draw[] (2.83, 1) to (2.83 + 3, 1);
			\draw[] (2.83, -1) to (2.83 + 3, -1);
			\draw[] (2.83 + 3, -1) arc (-90:90:1);
			
			\draw[] (-2.83, 1) to (-2.83 - 3, 1);
			\draw[] (-2.83, -1) to (-2.83 - 3, -1);
			\draw[] (-2.83 - 3, 1) arc (90:270:1);
			
			\draw[] (-1, -2.83) to (-1, -3 - 2.83);
			\draw[] (1, -2.83) to (1, -3 - 2.83);
			\draw[] (-1, -3 - 2.83) arc (180:360:1);
			
			\draw[] (2 - 0.71, 2 + 0.71) to (2 - 0.71 + 3 * 0.71, 2 + 0.71 + 3*0.71);
			\draw[] (2 + 0.71, 2 - 0.71) to (2 + 0.71 + 3 * 0.71, 2 - 0.71 + 3*0.71);
			\draw[] (2 - 0.71 + 3 * 0.71, 2 + 0.71 + 3*0.71) arc (135:135-180:1.005);
			
			\draw[] (-2 + 0.71, -2 - 0.71) to (-2 + 0.71 - 3 * 0.71, -2 - 0.71 - 3*0.71);
			\draw[] (-2 - 0.71, -2 + 0.71) to (-2 - 0.71 - 3 * 0.71, -2 + 0.71 - 3*0.71);
			\draw[] (-2 - 0.71 - 3 * 0.71, -2 + 0.71 - 3*0.71) arc (135:135+180:1.005);
			
			\draw[] (2 - 0.71, -2 - 0.71) to (2 - 0.71 + 3 * 0.71, -2 - 0.71 - 3*0.71);
			\draw[] (2 + 0.71, -2 + 0.71) to (2 + 0.71 + 3 * 0.71, -2 + 0.71 - 3*0.71);
			\draw[] (2 + 0.71 + 3 * 0.71, -2 + 0.71 - 3*0.71) arc (45:45-180:1.005);

			\draw[] (1, 2.83) to [out=-90, in=225]  (2 - 0.71, 2 + 0.71);
			\draw[] (-1, 2.83) to [out=-90, in=-45]  (-2 + 0.71, 2 + 0.71);
			\draw[] (2.83, 1) to [out=180, in=225]  (2 + 0.71, 2 - 0.71);
			\draw[] (2.83, -1) to [out=180, in=135]  (2 + 0.71, -2 + 0.71);
			
			\draw[] (-1, -2.83) to [out=-90+180, in=225+180]  (-2 + 0.71, -2 - 0.71);
			\draw[] (1, -2.83) to [out=-90+180, in=-45+180]  (2 - 0.71, -2 - 0.71);
			\draw[] (-2.83, -1) to [out=0, in=225+180]  (-2 - 0.71, -2 + 0.71);
			\draw[] (-2.83, 1) to [out=0, in=135+180]  (-2 - 0.71, 2 - 0.71);
			
			\draw[] (0, 0) circle (2);
			
			\draw[] (0, 2.83+0.5) circle (0.3);
			\draw[] (0, 2.83 + 1.5) circle (0.3);
			\draw[] (0, 2.83 + 3.5) circle (0.3);
			\draw[red] (0, 2.83 + 3.5) circle (0.4);
			\draw[dotted, very thick] (0, 2.83 + 2.2) to (0, 2.83 + 2.8);
			
			\node[red, scale = 0.9] at (-0.4, 2.4 + 3.45) {$\widetilde{\alpha}_{N-1}$};
			
			\draw[] (0, -2.83 - 0.5) circle (0.3);
			\draw[] (0, -2.83 - 1.5) circle (0.3);
			\draw[] (0, -2.83 - 3.5) circle (0.3);
			\draw[red] (0, -2.83 - 3.5) circle (0.4);
			\draw[dotted, very thick] (-0, -2.83 - 2.2) to (0, -2.83 - 2.8);
			
			\node[red, scale = 0.9] at (-0.6, -2.83 - 3.5 + 0.3) {$\widetilde{\alpha}_{3}$};

			\draw[] (-2.83 - 0.5, 0) circle (0.3);
			\draw[] (-2.83 - 1.5, 0) circle (0.3);
			\draw[] (-2.83 - 3.5, 0) circle (0.3);
			\draw[red] (-2.83 - 3.5, 0) circle (0.4);
			\draw[dotted, very thick] (-2.83 - 2.2, 0) to (-2.83 - 2.8, 0);
			\node[red, scale = 0.9] at (-2.83 - 3.5 + 0.4, 0.5) {$\widetilde{\alpha}_{5}$};

			\draw[] (2.83 + 0.5, 0) circle (0.3);
			\draw[] (2.83 + 1.5, 0) circle (0.3);
			\draw[] (2.83 + 3.5, 0) circle (0.3);
			\draw[red] (2.83 + 3.5, 0) circle (0.4);
			\draw[dotted, very thick] (2.83 + 2.2, 0) to (2.83 + 2.8, 0);
			\node[red, scale = 0.9] at (2.83 + 3.5 - 0.3, 0.6) {$\widetilde{\alpha}_{1}$};
			
			\draw[] (0.71*2.83 + 0.71*0.5, 0.71*2.83 + 0.71*0.5) circle (0.3);
			\draw[] (0.71*2.83 + 0.71*1.5, 0.71*2.83 + 0.71*1.5) circle (0.3);
			\draw[] (0.71*2.83 + 0.71*3.5, 0.71*2.83 + 0.71*3.5) circle (0.3);
			\draw[red] (0.71*2.83 + 0.71*3.5, 0.71*2.83 + 0.71*3.5) circle (0.4);
			\draw[dotted, very thick] (0.71*2.83 + 0.71*2.2, 0.71*2.83 + 0.71*2.2) to (0.71*2.83 + 0.71*2.8, 0.71*2.83 + 0.71*2.8);
			\node[red, scale = 0.9] at (0.71*2.83 + 0.71*3.5, 0.71*2.83 + 0.71*3.5 - 0.7) {$\widetilde{\alpha}_{N}$};
			
			\draw[] (0.71*2.83 + 0.71*0.5, -0.71*2.83 - 0.71*0.5) circle (0.3);
			\draw[] (0.71*2.83 + 0.71*1.5, -0.71*2.83 - 0.71*1.5) circle (0.3);
			\draw[] (0.71*2.83 + 0.71*3.5, -0.71*2.83 - 0.71*3.5) circle (0.3);
			\draw[red] (0.71*2.83 + 0.71*3.5, -0.71*2.83 - 0.71*3.5) circle (0.4);
			\draw[dotted, very thick] (0.71*2.83 + 0.71*2.2, -0.71*2.83 - 0.71*2.2) to (0.71*2.83 + 0.71*2.8, -0.71*2.83 - 0.71*2.8);
			\node[red, scale = 0.9] at (0.71*2.83 + 0.71*3.5 - 0.6, -0.71*2.83 - 0.71*3.5 - 0.2) {$\widetilde{\alpha}_{2}$};

			\draw[] (-0.71*2.83 - 0.71*0.5, -0.71*2.83 - 0.71*0.5) circle (0.3);
			\draw[] (-0.71*2.83 - 0.71*1.5, -0.71*2.83 - 0.71*1.5) circle (0.3);
			\draw[] (-0.71*2.83 - 0.71*3.5, -0.71*2.83 - 0.71*3.5) circle (0.3);
			\draw[red] (-0.71*2.83 - 0.71*3.5, -0.71*2.83 - 0.71*3.5) circle (0.4);
			\draw[dotted, very thick] (-0.71*2.83 - 0.71*2.2, -0.71*2.83 - 0.71*2.2) to (-0.71*2.83 - 0.71*2.8, -0.71*2.83 - 0.71*2.8);
			\node[red, scale = 0.9] at (-0.71*2.83 - 0.71*3.5 + 0.7, -0.71*2.83 - 0.71*3.5 - 0.2) {$\widetilde{\alpha}_{4}$};
			
	\end{tikzpicture}}
	\caption{The $(\Z/N\Z)$-action on $\widetilde{\S}$ and the curves $\widetilde{\alpha}_1, \dots, \widetilde{\alpha}_N$.}
	\label{S-curve_1}
\end{figure}

\begin{lemma} \label{redN}
	Let $\phi \in \Mod(\S)$ be such that there exists a nonseparating simple closed curve $\alpha$ with $\phi(\alpha) = \alpha$ and let $N > 0$ be an integer.
	Then there exists a regular finite cover $\widetilde{\S} \to \S$ with $\dim \H_1^{f.o.}(\widetilde{\S}, \Q)^\phi \geq N$.
\end{lemma}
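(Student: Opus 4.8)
The plan is to exploit Lemma \ref{lem}: it suffices to build a finite regular cover $\widetilde{\S} \to \S$ in which the lifts of (multiples of) $\alpha$ span a subspace of $\H_1(\widetilde{\S}, \Q)$ of dimension at least $N$. Since $\alpha$ is nonseparating, its homology class $[\alpha] \in \H_1(\S, \Z)$ is primitive, so there is a surjection $\pi_1(\S) \twoheadrightarrow \Z$ sending $[\alpha] \mapsto 1$; composing with reduction mod $N$ gives a surjection $\psi : \pi_1(\S) \twoheadrightarrow \Z/N\Z$ with $\psi(\alpha) = 1 \bmod N$. Let $\widetilde{\S} \to \S$ be the corresponding regular $\Z/N\Z$-cover (this is the cover depicted in Figure \ref{S-curve_1}). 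Because $\psi(\alpha)$ generates $\Z/N\Z$, the curve $\alpha$ has a single connected lift $\widetilde{\alpha}$ which is a curve wrapping $N$ times, equivalently $N$ disjoint simple lifts of the multiple curve $\alpha^N$; more precisely one should choose the surjection so that the lifts $\widetilde{\alpha}_1, \dots, \widetilde{\alpha}_N$ of a suitable multiple of $\alpha$ are $N$ disjoint simple closed curves cyclically permuted by the deck group $\Z/N\Z$.

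First I would set up the cover concretely. Pick a nonseparating simple closed curve $\beta$ meeting $\alpha$ once transversally (it exists since $\alpha$ is nonseparating), and choose the homomorphism $\psi : \H_1(\S, \Z) \to \Z/N\Z$ dual to $\beta$, i.e. $\psi(x) = \langle x, [\beta] \rangle \bmod N$, so that $\psi([\alpha]) = 1$ and $\psi([\beta]) = 0$. The associated $\Z/N\Z$-cover $\widetilde{\S}$ is obtained by cutting $\S$ along $\alpha$, taking $N$ copies, and regluing cyclically; the preimage of $\alpha$ consists of $N$ disjoint nonseparating simple closed curves $\widetilde{\alpha}_0, \dots, \widetilde{\alpha}_{N-1}$, each mapping homeomorphically to $\alpha$, cyclically permuted by the generator of the deck group.

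The key step is to show $\dim \span\langle [\widetilde{\alpha}_0], \dots, [\widetilde{\alpha}_{N-1}] \rangle_\Q \geq N$ — in fact I claim these $N$ classes are linearly independent in $\H_1(\widetilde{\S}, \Q)$ (after orienting them compatibly with the deck action). To see this, produce $N$ dual classes: lift $\beta$ to a path in $\widetilde{\S}$; since $\psi([\beta]) = 0$, each of the $N$ translates of $\beta$ under the deck group is a closed loop $\widetilde{\beta}_0, \dots, \widetilde{\beta}_{N-1}$ in $\widetilde{\S}$, and by construction the algebraic intersection number $\langle [\widetilde{\alpha}_i], [\widetilde{\beta}_j] \rangle$ equals $1$ if $i = j$ and $0$ otherwise (a single transverse intersection of $\alpha$ with $\beta$ downstairs lifts to one intersection point in each "sheet", distributed so that $\widetilde{\alpha}_i$ meets only $\widetilde{\beta}_i$). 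Hence the intersection pairing matrix between the $\widetilde{\alpha}_i$ and the $\widetilde{\beta}_j$ is the identity, which forces the $[\widetilde{\alpha}_i]$ to be linearly independent. Then Lemma \ref{lem} gives $\dim \H_1^{f.o.}(\widetilde{\S}, \Q)^\phi \geq N$.

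The main obstacle is the intersection-number bookkeeping in the cover: one must choose $\beta$ and orientations carefully so that the lifts $\widetilde{\beta}_j$ are genuinely closed (which needs $\psi([\beta]) = 0$, hence $[\beta]$ should be chosen in $\ker \psi$ while still meeting $\alpha$ once — possible since $i(\alpha,\beta) = 1$ already forces $\langle[\alpha],[\beta]\rangle = \pm 1$, but one may need to modify $\beta$ within its homology class or pick a different curve to land in $\ker\psi$; alternatively work with the Poincaré dual and the transfer map) and so that each $\widetilde{\alpha}_i$ meets exactly one $\widetilde{\beta}_j$. I would handle this either by the explicit cut-and-reglue model above, tracking which sheet each intersection point lands in, or more cleanly by a transfer/restriction argument: the restriction $\H_1(\widetilde\S,\Q)\to\H_1(\widetilde\S,\Q)$ decomposes under the $\Z/N\Z$-action into isotypic pieces, and the $N$ curves $\widetilde\alpha_i$, being a free deck-orbit, span a copy of the regular representation, hence an $N$-dimensional subspace, provided a single $[\widetilde\alpha_0]$ is nonzero — which it is, as it maps to $[\alpha]\neq 0$ under $p_*$. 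This last formulation avoids explicit intersection numbers entirely and is the version I would ultimately write up.
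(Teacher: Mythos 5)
There is a genuine gap: the cover you construct is the wrong one, and the two fallback arguments you offer do not repair it. You set $\psi(x)=\langle x,[\beta]\rangle \bmod N$ so that $\psi([\alpha])=1$. But if $\psi([\alpha])$ generates $\Z/N\Z$, the full preimage of $\alpha$ in the cover is a \emph{single} circle (an $N$-fold cover of $\alpha$); the "$N$ lifts of the multiple $\alpha^N$" all have the same image and hence the same homology class up to sign, so the span they generate is at most one-dimensional and Lemma \ref{lem} gives nothing. Your alternative description ("cut $\S$ along $\alpha$, take $N$ copies, reglue cyclically") is a \emph{different} cover, the one with $\psi([\alpha])=0$ and $\psi([\beta])=\pm1$; there $\alpha$ does have $N$ disjoint homeomorphic lifts, but each sheet $P_i$ is a subsurface with oriented boundary $\widetilde{\alpha}_i-\widetilde{\alpha}_{i+1}$, so all the classes $[\widetilde{\alpha}_i]$ are \emph{equal} in $\H_1(\widetilde{\S},\Q)$ and the span is again one-dimensional (and $\beta$ does not lift closed in this cover, so the dual classes $\widetilde{\beta}_j$ you need are unavailable). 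The final "regular representation" argument is also false as stated: a free deck-orbit of curves spans the regular representation only if the class has nonzero component in \emph{every} isotypic piece; $[\widetilde{\alpha}_0]\neq 0$ (equivalently $p_*[\widetilde{\alpha}_0]=[\alpha]\neq0$) only controls the trivial isotypic component, and the cut-and-reglue cover is exactly a counterexample where the orbit is free but the span is a line.

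The fix is to choose $\psi$ vanishing on \emph{both} $[\alpha]$ and $[\beta]$, which is possible precisely because $g\geq 2$: extend $\alpha,\beta$ to a symplectic basis and let $\psi$ be a surjection onto $\Z/N\Z$ supported on a complementary handle. Then a regular neighborhood of $\alpha\cup\beta$ (a one-holed torus) lifts homeomorphically to $N$ disjoint copies permuted by the deck group, so $\langle[\widetilde{\alpha}_i],[\widetilde{\beta}_j]\rangle=\pm\delta_{ij}$ and your intersection-pairing argument does prove linear independence. With that correction your route is a legitimate and arguably cleaner alternative to the paper's, which instead builds the cover explicitly (Fig. \ref{S-curve_1}) and deduces independence of the $[\widetilde{\alpha}_i]$ from connectedness of $\widetilde{\S}\setminus\bigcup\widetilde{\alpha}_i$. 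As written, however, the proposal does not establish the $N$-dimensional lower bound.
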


\begin{proof}
	Let $g \geq 2$ be the genus of $\S$. Consider the surface $\widetilde{\S}$ of genus $1+N(g-1)$ shown in Fig. \ref{S-curve_1}, consisting of the connected sum of $N$ subsurfaces of genus $g-1$ and the torus. The group $\Z / N\Z$ acts on $\widetilde{\S}$ by rotations and we obtain a finite regular covering $p: \widetilde{\S} \to \S' = \widetilde{\S} / (\Z / N\Z)$. 
	
	Let us fix $N$ curves $\widetilde{\alpha}_1, \dots, \widetilde{\alpha}_N$ on $\widetilde{\S}$ as shown in Fig. \ref{S-curve_1}. Note that all these curves belong to the same $(\Z / N\Z)$-orbit, let us choose orientations on these curves agreed with this action.
	Since the surface $\widetilde{\S} \setminus \cup \{ \widetilde{\alpha}_1, \dots, \widetilde{\alpha}_N \}$ is connected, it follows that the homology classes $[\widetilde{\alpha}_1], \dots, [\widetilde{\alpha}_N] \in \H_1(\S, \Q)$ are linearly independent. Hence we obtain $$\dim  \span \langle [\widetilde{\alpha}_1], \dots, [\widetilde{\alpha}_N] \rangle_\Q = N.$$
	
	Denote $\alpha' = p(\widetilde{\alpha}_1)$. Note that $\alpha'$ is a nonseparating simple closed curve on $\S'$ and $p(\widetilde{\alpha}_i) = \alpha'$ for all $i = 1, \dots, N$.
	Since all nonseparating simple closed curves on $\S$ are $\Mod(\S)$-equivalent, one can choose a homeomorphism $\eta : \S' \cong \S$ such that $\eta(\alpha') = \alpha$. We obtain the covering  $\eta \circ p : \widetilde{\S} \to \S$ such that $\widetilde{\alpha}_1, \dots, \widetilde{\alpha}_N$ are the lifts of $\alpha$. By Lemma \ref{lem} we have
	$$\dim \H_1^{f.o.}(\widetilde{\S}, \Q)^\phi \geq \dim  \span \langle [\widetilde{\alpha}_1], \dots, [\widetilde{\alpha}_N] \rangle_\Q = N,$$
	 which concludes the proof.
\end{proof}

\begin{figure}[h]
	\scalebox{1}{
		\begin{tikzpicture}
			
			
			
			
			\draw[dotted, very thick] (-2 + 0.71, 2 + 0.71) arc (90+22.5:135+22.5:2.63);
			
			\draw[] (6, 6) arc (45:135:1.41*6);
			\draw[] (6, 6) to (6-0.5, 6);
			\draw[] (6, 6) to (6, 6+0.5);
			\node[] at (0, 1.41*6+0.5) {$2\pi/(N+1)$};

			\draw[] (1, 2.83) to (1, 3 + 2.83);
			\draw[] (-1, 2.83) to (-1, 3 + 2.83);

			\node[red, scale=0.9] at (1-0.35, 3 + 2.83+0.4) {$\widetilde{\delta}^{1}_{N}$};
			\node[red, scale=0.9] at (1-0.35, 2 + 2.83+0.4) {$\widetilde{\delta}^{2}_{N}$};
			\node[red, scale=0.9] at (1-0.35, 1 + 2.83+0.4) {$\widetilde{\delta}^{g-2}_{N}$};
			\node[red, scale=0.9] at (1-0.35, 0 + 2.83+0.4) {$\widetilde{\delta}^{g-1}_{N}$};

			\node[red, scale=0.9] at (-1+0.35, -3 - 2.83-0.4) {$\widetilde{\delta}^{1}_{3}$};
			\node[red, scale=0.9] at (-1+0.35, -2 - 2.83-0.4) {$\widetilde{\delta}^{2}_{3}$};
			\node[red, scale=0.9] at (-1+0.35, -1 - 2.83-0.4) {$\widetilde{\delta}^{g-2}_{3}$};
			\node[red, scale=0.9] at (-1+0.35, -0 - 2.83-0.4) {$\widetilde{\delta}^{g-1}_{3}$};

			\node[red, scale=0.9] at (3 + 2.83+0.35, -1+0.35) {$\widetilde{\delta}^{1}_{1}$};
			\node[red, scale=0.9] at (2 + 2.83+0.35, -1+0.35) {$\widetilde{\delta}^{2}_{1}$};
			\node[red, scale=0.9] at (1 + 2.83+0.45, -1+0.35) {$\widetilde{\delta}^{g-2}_{1}$};
			\node[red, scale=0.9] at (0 + 2.83+0.45, -1+0.35) {$\widetilde{\delta}^{g-1}_{1}$};

			\node[red, scale=0.9] at (-3 - 2.83-0.3, 1-0.35) {$\widetilde{\delta}^{1}_{5}$};
			\node[red, scale=0.9] at (-2 - 2.83-0.3, 1-0.35) {$\widetilde{\delta}^{2}_{5}$};
			\node[red, scale=0.9] at (-1 - 2.83-0.4, 1-0.35) {$\widetilde{\delta}^{g-2}_{5}$};
			\node[red, scale=0.9] at (-0 - 2.83-0.4, 1-0.35) {$\widetilde{\delta}^{g-1}_{5}$};

			\draw[red] (-1, 3 + 2.83) to [out=10, in=170] (1, 3 + 2.83);
			\draw[red, dashed] (-1, 3 + 2.83) to [out=-10, in=-170] (1, 3 + 2.83);
			\draw[red] (-1, 2 + 2.83) to [out=10, in=170] (1, 2 + 2.83);
			\draw[red, dashed] (-1, 2 + 2.83) to [out=-10, in=-170] (1, 2 + 2.83);
			
			\draw[red] (-1, 1 + 2.83) to [out=10, in=170] (1, 1 + 2.83);
			\draw[red, dashed] (-1, 1 + 2.83) to [out=-10, in=-170] (1, 1 + 2.83);
			\draw[red] (-1, 2.83) to [out=10, in=170] (1, 2.83);
			\draw[red, dashed] (-1, 2.83) to [out=-10, in=-170] (1, 2.83);

			\draw[red] (-1, -3 - 2.83) to [out=-10, in=-170] (1, -3 - 2.83);
			\draw[red, dashed] (-1, -3 - 2.83) to [out=10, in=170] (1, -3 - 2.83);
			\draw[red] (-1, -2 - 2.83) to [out=-10, in=-170] (1, -2 - 2.83);
			\draw[red, dashed] (-1, -2 - 2.83) to [out=10, in=170] (1, -2 - 2.83);
			
			\draw[red] (-1, -1 - 2.83) to [out=-10, in=-170] (1, -1 - 2.83);
			\draw[red, dashed] (-1, -1 - 2.83) to [out=10, in=170] (1, -1 - 2.83);
			\draw[red] (-1, -2.83) to [out=-10, in=-170] (1, -2.83);
			\draw[red, dashed] (-1, -2.83) to [out=10, in=170] (1, -2.83);

			\draw[red] (-3 - 2.83, -1) to [out=100, in=-100] (-3 - 2.83, 1);
			\draw[red, dashed] (-3 - 2.83, -1) to [out=80, in=-80] (-3 - 2.83, 1);
			\draw[red] (-2 - 2.83, -1) to [out=100, in=-100] (-2 - 2.83, 1);
			\draw[red, dashed] (-2 - 2.83, -1) to [out=80, in=-80] (-2 - 2.83, 1);
			
			\draw[red] (-1 - 2.83, -1) to [out=100, in=-100] (-1 - 2.83, 1);
			\draw[red, dashed] (-1 - 2.83, -1) to [out=80, in=-80] (-1- 2.83, 1);
			\draw[red] (- 2.83, -1) to [out=100, in=-100] (-2.83, 1);
			\draw[red, dashed] (- 2.83, -1) to [out=80, in=-80] (- 2.83, 1);

			\draw[red, dashed] (3 + 2.83, -1) to [out=100, in=-100] (3 + 2.83, 1);
			\draw[red] (3 + 2.83, -1) to [out=80, in=-80] (3 + 2.83, 1);
			\draw[red, dashed] (2 + 2.83, -1) to [out=100, in=-100] (2 + 2.83, 1);
			\draw[red] (2 + 2.83, -1) to [out=80, in=-80] (2 + 2.83, 1);
			
			\draw[red, dashed] (1 + 2.83, -1) to [out=100, in=-100] (1 + 2.83, 1);
			\draw[red] (1 + 2.83, -1) to [out=80, in=-80] (1+2.83, 1);
			\draw[red, dashed] (2.83, -1) to [out=100, in=-100] (2.83, 1);
			\draw[red] (2.83, -1) to [out=80, in=-80] (2.83, 1);
			
			\node[red, scale=0.9] at (2 + 0.71 + 3 * 0.71 + 0.4, 2 - 0.71 + 3*0.71 - 0.3) {$\widetilde{\delta}^{1}_{N+1}$};
			\node[red, scale=0.9] at (2 + 0.71 + 2 * 0.71 + 0.4, 2 - 0.71 + 2*0.71 - 0.3) {$\widetilde{\delta}^{2}_{N+1}$};
			\node[red, scale=0.9] at (2 + 0.71 + 1 * 0.71 + 0.4, 2 - 0.71 + 1*0.71 - 0.3) {$\widetilde{\delta}^{g-2}_{N+1}$};
			\node[red, scale=0.9] at (2 + 0.71 + 0 * 0.71 -0.6, 2 - 0.71 + 0*0.71) {$\widetilde{\delta}^{g-1}_{N+1}$};
			
			\node[red, scale=0.9] at (2 + 0.71 + 3 * 0.71 + 0.3, -2 + 0.71 - 3*0.71 + 0.3) {$\widetilde{\delta}^{1}_{2}$};
			\node[red, scale=0.9] at (2 + 0.71 + 2 * 0.71 + 0.3, -2 + 0.71 - 2*0.71 + 0.3) {$\widetilde{\delta}^{2}_{2}$};
			\node[red, scale=0.9] at (2 + 0.71 + 1 * 0.71 + 0.3, -2 + 0.71 - 1*0.71 + 0.3) {$\widetilde{\delta}^{g-2}_{2}$};
			\node[red, scale=0.9] at (2 + 0.71 + 0 * 0.71 -0.6, -2 + 0.71 - 0*0.71 -0.1) {$\widetilde{\delta}^{g-1}_{2}$};
			
			\node[red, scale=0.9] at (-2 - 0.71 - 3 * 0.71 - 0.2, -2 + 0.71 - 3*0.71 + 0.3) {$\widetilde{\delta}^{1}_{4}$};
			\node[red, scale=0.9] at (-2 - 0.71 - 2 * 0.71 - 0.2, -2 + 0.71 - 2*0.71 + 0.3) {$\widetilde{\delta}^{2}_{4}$};
			\node[red, scale=0.9] at (-2 - 0.71 - 1 * 0.71 - 0.1, -2 + 0.71 - 1*0.71 + 0.3) {$\widetilde{\delta}^{g-2}_{4}$};
			\node[red, scale=0.9] at (-2 - 0.71 - 0 * 0.71 + 0.7, -2 + 0.71 - 0*0.71 -0.1) {$\widetilde{\delta}^{g-1}_{4}$};

			\draw[red] (2 - 0.71 + 3 * 0.71, 2 + 0.71 + 3*0.71) to [out=10-45, in=170-45] (2 + 0.71 + 3 * 0.71, 2 - 0.71 + 3*0.71);
			\draw[red, dashed] (2 - 0.71 + 3 * 0.71, 2 + 0.71 + 3*0.71) to [out=-10-45, in=-170-45] (2 + 0.71 + 3 * 0.71, 2 - 0.71 + 3*0.71);
			
			\draw[red] (2 - 0.71 + 2 * 0.71, 2 + 0.71 + 2*0.71) to [out=10-45, in=170-45] (2 + 0.71 + 2 * 0.71, 2 - 0.71 + 2*0.71);
			\draw[red, dashed] (2 - 0.71 + 2 * 0.71, 2 + 0.71 + 2*0.71) to [out=-10-45, in=-170-45] (2 + 0.71 + 2 * 0.71, 2 - 0.71 + 2*0.71);
			
			\draw[red] (2 - 0.71 + 1 * 0.71, 2 + 0.71 + 1*0.71) to [out=10-45, in=170-45] (2 + 0.71 + 1 * 0.71, 2 - 0.71 + 1*0.71);
			\draw[red, dashed] (2 - 0.71 + 1 * 0.71, 2 + 0.71 + 1*0.71) to [out=-10-45, in=-170-45] (2 + 0.71 + 1 * 0.71, 2 - 0.71 + 1*0.71);
			
			\draw[red] (2 - 0.71 + 0 * 0.71, 2 + 0.71 + 0*0.71) to [out=10-45, in=170-45] (2 + 0.71 + 0 * 0.71, 2 - 0.71 + 0*0.71);
			\draw[red, dashed] (2 - 0.71 + 0 * 0.71, 2 + 0.71 + 0*0.71) to [out=-10-45, in=-170-45] (2 + 0.71 + 0 * 0.71, 2 - 0.71 + 0*0.71);

			\draw[red] (-2 + 0.71 - 3 * 0.71, -2 - 0.71 - 3*0.71) to [out=10+135, in=-10-45] (-2 - 0.71 - 3 * 0.71, -2 + 0.71 - 3*0.71);
			\draw[red, dashed] (-2 + 0.71 - 3 * 0.71, -2 - 0.71 - 3*0.71) to [out=-10+135, in=10-45] (-2 - 0.71 - 3 * 0.71, -2 + 0.71 - 3*0.71);
			
			\draw[red] (-2 + 0.71 - 2* 0.71, -2 - 0.71 - 2*0.71) to [out=10+135, in=-10-45] (-2 - 0.71 - 2 * 0.71, -2 + 0.71 - 2*0.71);
			\draw[red, dashed] (-2 + 0.71 - 2 * 0.71, -2 - 0.71 - 2*0.71) to [out=-10+135, in=10-45] (-2 - 0.71 - 2 * 0.71, -2 + 0.71 - 2*0.71);
			
			\draw[red] (-2 + 0.71 - 1* 0.71, -2 - 0.71 - 1*0.71) to [out=10+135, in=-10-45] (-2 - 0.71 - 1 * 0.71, -2 + 0.71 - 1*0.71);
			\draw[red, dashed] (-2 + 0.71 - 1 * 0.71, -2 - 0.71 - 1*0.71) to [out=-10+135, in=10-45] (-2 - 0.71 - 1 * 0.71, -2 + 0.71 - 1*0.71);
			
			\draw[red] (-2 + 0.71 - 0 * 0.71, -2 - 0.71 - 0*0.71) to [out=10+135, in=-10-45] (-2 - 0.71 - 0 * 0.71, -2 + 0.71 - 0*0.71);
			\draw[red, dashed] (-2 + 0.71 - 0 * 0.71, -2 - 0.71 - 0*0.71) to [out=-10+135, in=10-45] (-2 - 0.71 - 0 * 0.71, -2 + 0.71 - 0*0.71);

			\draw[red] (2 - 0.71 + 3 * 0.71, -2 - 0.71 - 3*0.71) to [out=-10+45, in=10-135] (2 + 0.71 + 3 * 0.71, -2 + 0.71 - 3*0.71);
			\draw[red, dashed] (2 - 0.71 + 3 * 0.71, -2 - 0.71 - 3*0.71) to [out=10+45, in=-10-135] (2 + 0.71 + 3 * 0.71, -2 + 0.71 - 3*0.71);
			
			\draw[red] (2 - 0.71 + 2 * 0.71, -2 - 0.71 - 2*0.71) to [out=-10+45, in=10-135] (2 + 0.71 + 2 * 0.71, -2 + 0.71 - 2*0.71);
			\draw[red, dashed] (2 - 0.71 + 2 * 0.71, -2 - 0.71 - 2*0.71) to [out=10+45, in=-10-135] (2 + 0.71 + 2 * 0.71, -2 + 0.71 - 2*0.71);
			
			\draw[red] (2 - 0.71 + 1* 0.71, -2 - 0.71 - 1*0.71) to [out=-10+45, in=10-135] (2 + 0.71 + 1 * 0.71, -2 + 0.71 - 1*0.71);
			\draw[red, dashed] (2 - 0.71 + 1 * 0.71, -2 - 0.71 - 1*0.71) to [out=10+45, in=-10-135] (2 + 0.71 + 1 * 0.71, -2 + 0.71 - 1*0.71);
			
			\draw[red] (2 - 0.71 + 0 * 0.71, -2 - 0.71 - 0*0.71) to [out=-10+45, in=10-135] (2 + 0.71 + 0 * 0.71, -2 + 0.71 - 0*0.71);
			\draw[red, dashed] (2 - 0.71 + 0* 0.71, -2 - 0.71 - 0*0.71) to [out=10+45, in=-10-135] (2 + 0.71 + 0 * 0.71, -2 + 0.71 - 0*0.71);

			\draw[dotted, very thick] (-1, 3 + 2.83) arc (90:180: 2 + 2.83);

			\draw[] (2.83, 1) to (2.83 + 3, 1);
			\draw[] (2.83, -1) to (2.83 + 3, -1);

			\draw[] (-2.83, 1) to (-2.83 - 3, 1);
			\draw[] (-2.83, -1) to (-2.83 - 3, -1);

			\draw[] (-1, -2.83) to (-1, -3 - 2.83);
			\draw[] (1, -2.83) to (1, -3 - 2.83);

			\draw[] (2 - 0.71, 2 + 0.71) to (2 - 0.71 + 3 * 0.71, 2 + 0.71 + 3*0.71);
			\draw[] (2 + 0.71, 2 - 0.71) to (2 + 0.71 + 3 * 0.71, 2 - 0.71 + 3*0.71);

			\draw[] (-2 + 0.71, -2 - 0.71) to (-2 + 0.71 - 3 * 0.71, -2 - 0.71 - 3*0.71);
			\draw[] (-2 - 0.71, -2 + 0.71) to (-2 - 0.71 - 3 * 0.71, -2 + 0.71 - 3*0.71);

			\draw[] (2 - 0.71, -2 - 0.71) to (2 - 0.71 + 3 * 0.71, -2 - 0.71 - 3*0.71);
			\draw[] (2 + 0.71, -2 + 0.71) to (2 + 0.71 + 3 * 0.71, -2 + 0.71 - 3*0.71);

			\draw[] (1, 3 + 2.83) to [out=90, in=45]  (2 - 0.71 + 3 * 0.71, 2 + 0.71 + 3*0.71);
			\draw[] (-1, -3 - 2.83) to [out=-90, in=-135]  (-2 + 0.71 - 3 * 0.71, -2 - 0.71 - 3*0.71);
			
			\draw[] (-3 - 2.83, -1) to [out=180, in=-135]  (-2 - 0.71 - 3*0.71, -2 + 0.71 - 3 * 0.71);	
			\draw[] (3 + 2.83, -1) to [out=0, in=-45]  (+2 + 0.71 + 3*0.71, -2 + 0.71 - 3 * 0.71);	
			
			\draw[] (1, -3 - 2.83) to [out=-90, in=-45]  (2 - 0.71 + 3 * 0.71, -2 - 0.71 - 3*0.71);
			\draw[] (3 + 2.83, 1) to [out=0, in=45]  (+2 + 0.71 + 3*0.71, 2 - 0.71 + 3 * 0.71);	
			
			\draw[] (1, 2.83) to [out=-90, in=225]  (2 - 0.71, 2 + 0.71);
			\draw[] (-1, 2.83) to [out=-90, in=-45]  (-2 + 0.71, 2 + 0.71);
			\draw[] (2.83, 1) to [out=180, in=225]  (2 + 0.71, 2 - 0.71);
			\draw[] (2.83, -1) to [out=180, in=135]  (2 + 0.71, -2 + 0.71);
			
			\draw[] (-1, -2.83) to [out=-90+180, in=225+180]  (-2 + 0.71, -2 - 0.71);
			\draw[] (1, -2.83) to [out=-90+180, in=-45+180]  (2 - 0.71, -2 - 0.71);
			\draw[] (-2.83, -1) to [out=0, in=225+180]  (-2 - 0.71, -2 + 0.71);
			\draw[] (-2.83, 1) to [out=0, in=135+180]  (-2 - 0.71, 2 - 0.71);
			
			\draw[] (0, 0) circle (2);
			\draw[] (0, 0) circle (7.2);
			
			\draw[] (0, 2.83+0.5) circle (0.3);
			\draw[] (0, 2.83 + 2.5) circle (0.3);
			\draw[dotted, very thick] (0, 2.83 + 1.2) to (0, 2.83 + 1.8);

			\draw[] (0, -2.83 - 0.5) circle (0.3);
			\draw[] (0, -2.83 - 2.5) circle (0.3);
			\draw[dotted, very thick] (-0, -2.83 - 1.2) to (0, -2.83 - 1.8);

			\draw[] (-2.83 - 0.5, 0) circle (0.3);
			\draw[] (-2.83 - 2.5, 0) circle (0.3);
			\draw[dotted, very thick] (-2.83 - 1.2, 0) to (-2.83 - 1.8, 0);
			
			\draw[] (2.83 + 0.5, 0) circle (0.3);
			\draw[] (2.83 + 2.5, 0) circle (0.3);
			\draw[dotted, very thick] (2.83 + 1.2, 0) to (2.83 + 1.8, 0);
			
			\draw[] (0.71*2.83 + 0.71*0.5, 0.71*2.83 + 0.71*0.5) circle (0.3);
			\draw[] (0.71*2.83 + 0.71*2.5, 0.71*2.83 + 0.71*2.5) circle (0.3);
			\draw[dotted, very thick] (0.71*2.83 + 0.71*1.2, 0.71*2.83 + 0.71*1.2) to (0.71*2.83 + 0.71*1.8, 0.71*2.83 + 0.71*1.8);
			
			\draw[] (0.71*2.83 + 0.71*0.5, -0.71*2.83 - 0.71*0.5) circle (0.3);
			\draw[] (0.71*2.83 + 0.71*2.5, -0.71*2.83 - 0.71*2.5) circle (0.3);
			\draw[dotted, very thick] (0.71*2.83 + 0.71*1.2, -0.71*2.83 - 0.71*1.2) to (0.71*2.83 + 0.71*1.8, -0.71*2.83 - 0.71*1.8);

			\draw[] (-0.71*2.83 - 0.71*0.5, -0.71*2.83 - 0.71*0.5) circle (0.3);
			\draw[] (-0.71*2.83 - 0.71*2.5, -0.71*2.83 - 0.71*2.5) circle (0.3);
			\draw[dotted, very thick] (-0.71*2.83 - 0.71*1.2, -0.71*2.83 - 0.71*1.2) to (-0.71*2.83 - 0.71*1.8, -0.71*2.83 - 0.71*1.8);

	\end{tikzpicture}}
	\caption{The $(\Z/(N+1)\Z)$-action on $\widetilde{\S}$ and the curves $\widetilde{\delta}_i^j$.}
	\label{S-curve_2}
\end{figure}

Now we consider the case where some nonzero power of $\phi \in \Mod(\S)$ stabilizes a separating curve $\delta$ on $\S$. Similarly, we can replace $\phi$ by its power and assume that $\phi(\delta) = \delta$.

\begin{lemma} \label{redS}
	Let $\phi \in \Mod(\S)$ be such that there exists a separating simple closed curve $\delta$ with $\phi(\delta) = \delta$ and let $N > 0$ be an integer.
	Then there exists a regular finite cover $\widetilde{\S} \to \S$ with $\dim \H_1^{f.o.}(\widetilde{\S}, \Q)^\phi \geq N$.
\end{lemma}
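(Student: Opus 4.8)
The plan is to adapt the strategy of Lemma~\ref{redN}, the only new difficulty being that $\delta$ is null-homologous, so a priori all of its lifts could be null-homologous too. By Lemma~\ref{lem} it suffices to produce a regular finite cover $p\colon \widetilde{\S}\to\S$ for which the lifts $\widetilde{\delta}_1,\dots,\widetilde{\delta}_d$ of (multiples of) $\delta$ satisfy $\dim\span\langle[\widetilde{\delta}_1],\dots,[\widetilde{\delta}_d]\rangle_\Q\geq N$. The key idea is to choose the cover so that $\delta$ lifts to at least $N+1$ \emph{disjoint} simple closed curves and, crucially, so that each of the two subsurfaces into which $\delta$ cuts $\S$ has \emph{connected} preimage; this forces the lifts of $\delta$ to be non-separating in $\widetilde{\S}$ and to be homologically independent except for a single obvious relation.

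Concretely, I would write $\S=\S_1\cup_\delta\S_2$ with $\S_i$ a connected subsurface of genus $g_i\geq 1$ having the single boundary component $\delta$, so that $\pi_1(\S)=\pi_1(\S_1)\ast_{\langle\delta\rangle}\pi_1(\S_2)$. Choose surjections $\rho_i\colon \pi_1(\S_i)\twoheadrightarrow \Z/(N+1)\Z$, which exist since $\H_1(\S_i,\Z)\cong\Z^{2g_i}$ with $g_i\geq 1$. Because $\pi_1(\S_i)$ is free and $\delta$ is a product of commutators of free generators, $\rho_i(\delta)=0$; hence $\rho_1$ and $\rho_2$ agree on $\langle\delta\rangle$ and amalgamate to a surjection $\rho\colon \pi_1(\S)\twoheadrightarrow\Z/(N+1)\Z$. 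Let $p\colon\widetilde{\S}\to\S$ be the associated regular cover — this is precisely the $(\Z/(N+1)\Z)$-cover pictured in Fig.~\ref{S-curve_2}. Since $\rho(\delta)=0$, the curve $\delta$ lifts to $N+1$ disjoint simple closed curves $\widetilde{\delta}_1,\dots,\widetilde{\delta}_{N+1}$, and these are all the lifts of multiples of $\delta$; since $\rho$ restricts to the surjection $\rho_i$ on $\pi_1(\S_i)$, the preimage $p^{-1}(\S_i)$ is connected.

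Finally I would compute the span: the complement $\widetilde{\S}\setminus(\widetilde{\delta}_1\cup\dots\cup\widetilde{\delta}_{N+1})=p^{-1}(\S_1\setminus\delta)\sqcup p^{-1}(\S_2\setminus\delta)$ has exactly two connected components, so by the standard fact that $\ell$ disjoint simple closed curves cutting a closed orientable surface into $t$ pieces span a subspace of $\H_1$ of dimension $\ell-t+1$ (immediate from the long exact sequence of the pair $(\widetilde{\S},\widetilde{\delta}_1\cup\dots\cup\widetilde{\delta}_{N+1})$), one gets $\dim\span\langle[\widetilde{\delta}_1],\dots,[\widetilde{\delta}_{N+1}]\rangle_\Q=(N+1)-2+1=N$. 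Lemma~\ref{lem} then gives $\dim\H_1^{f.o.}(\widetilde{\S},\Q)^\phi\geq N$, completing the proof. The one genuinely delicate point is the construction of $\rho$: one must keep both sides of $\delta$ with connected preimage while still having $\rho(\delta)=0$, and the amalgamation above is exactly what makes this possible; the verification that $\rho$ is well defined and surjective and the final homology count are routine.
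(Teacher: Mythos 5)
Your proof is correct and is essentially the paper's argument: you produce the same $(\Z/(N+1)\Z)$-cover in which $\delta$ has $N+1$ disjoint lifts and both complementary subsurfaces of $\delta$ have connected preimage, and you conclude via Lemma \ref{lem} together with the count $(N+1)-2+1=N$ for the dimension of the span of the lifts. The only difference is presentational: you build the cover bottom-up from an amalgamated surjection $\pi_1(\S)\twoheadrightarrow \Z/(N+1)\Z$ that kills $\delta$ and is surjective on each side, whereas the paper constructs $\widetilde{\S}$ explicitly with a rotation action and then identifies the quotient with $(\S,\delta)$ using the change-of-coordinates principle for separating curves.
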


\begin{proof}
Let $g \geq 2$ be the genus of $\S$. Consider the surface $\widetilde{\S}$ of genus $1+(N+1)(g-1)$ shown in Fig. \ref{S-curve_2}, consisting two tori connected by $N+1$ subsurfaces of genus $g-2$. 
The group $\Z / (N+1)\Z$ acts on $\widetilde{\S}$ by rotations and we obtain a finite regular covering $p: \widetilde{\S} \to \S' = \widetilde{\S} / (\Z / (N+1)\Z)$. 

For each $j = 1, \dots, g-1$ let us fix $N+1$ curves $\widetilde{\delta}^j_1, \dots, \widetilde{\delta}^j_{N+1}$ on $\widetilde{\S}$ as shown in Fig. \ref{S-curve_2}. Note that for a fixed $j$ all these curves belong to the same $(\Z / (N+1)\Z)$-orbit, let us choose orientations on these curves agreed with this action. Since the surface $\widetilde{\S} \setminus \cup \{ \widetilde{\delta}^j_1, \dots, \widetilde{\delta}^j_N \}$ is connected, it follows that the homology classes $[\widetilde{\delta}^j_1], \dots, [\widetilde{\delta}^j_N] \in \H_1(\S, \Q)$ are linearly independent for each $j$. One can easily check that $\widetilde{\S} \setminus \cup \{ \widetilde{\delta}^j_1, \dots, \widetilde{\delta}^j_{N+1} \}$ is not connected, so 
$$\dim  \span \langle [\widetilde{\delta}^j_1], \dots, [\widetilde{\delta}^j_{N+1}] \rangle_\Q = N$$
for any $j = 1, \dots, g-1$.

For each $j$ denote $\delta^j = p(\widetilde{\delta}^j_1)$. One can check that $\delta$ is a separating simple closed curve on $\S'$ bounding a subsurface of genus $j$. Moreover, we have $p(\widetilde{\delta}^j_i) = \delta^j$ for all $i = 1, \dots, N+1$. Denote by $h$ the genus of one of the subsurfaces bounded by $\delta$. Since the $\Mod(\S)$-orbit of $\delta$ is uniquely determined by $h$,  one can choose a homeomorphism $\eta : \S' \cong \S$ such that $\eta(\delta^h) = \delta$. We obtain the covering  $\eta \circ p : \widetilde{\S} \to \S$ such that $\widetilde{\delta}^h_1, \dots, \widetilde{\delta}^h_{N+1}$ are the lifts of $\delta$. By Lemma \ref{lem} we have
$$\dim \H_1^{f.o.}(\widetilde{\S}, \Q)^\phi \geq \dim  \span \langle [\widetilde{\delta}^h_1], \dots, [\widetilde{\delta}^h_{N+1}] \rangle_\Q = N,$$
which concludes the proof.
\end{proof}

\begin{prop} \label{red}
Let $\phi \in \Mod(\S)$ be a reducible element and let $N > 0$ be an integer. Then there exists a regular finite cover $\widetilde{\S} \to \S$ with $\dim \H_1^{f.o.}(\widetilde{\S}, \Q)^\phi \geq N$.
\end{prop}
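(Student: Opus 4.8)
The plan is to reduce the statement immediately to the two cases already settled by Lemmas \ref{redN} and \ref{redS}. Since $\phi$ is reducible, the definition recalled in Section \ref{Sec2} provides an integer $k > 0$ and an essential simple closed curve $\alpha$ on $\S$ with $\phi^k(\alpha) = \alpha$. Because $\S$ is closed, every essential simple closed curve is either nonseparating or separating, and exactly one of these two alternatives holds for $\alpha$.

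First I would record the bookkeeping reduction to a power of $\phi$. As noted in Section \ref{Sec2}, for any finite regular cover $\widetilde{\S} \to \S$ and any integer $n \neq 0$ one has $\H_1^{f.o.}(\widetilde{\S}, \Q)^{\phi} = \H_1^{f.o.}(\widetilde{\S}, \Q)^{\phi^n}$, since orbit (in)finiteness is unchanged upon passing to finite-index subgroups of $\langle \phi \rangle$. Hence it is enough to exhibit, for the given $N$, a regular finite cover with $\dim \H_1^{f.o.}(\widetilde{\S}, \Q)^{\phi^k} \geq N$; equivalently, we may replace $\phi$ by $\phi^k$ from the outset and assume $\phi(\alpha) = \alpha$.

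With this in hand the argument is a two-line case split: if $\alpha$ is nonseparating, apply Lemma \ref{redN} to $\phi$ and $N$; if $\alpha$ is separating, apply Lemma \ref{redS} to $\phi$ and $N$. In either case one obtains a regular finite cover $\widetilde{\S} \to \S$ with $\dim \H_1^{f.o.}(\widetilde{\S}, \Q)^{\phi} \geq N$, which is the claim. I do not expect any genuine obstacle here: all the substance lies in the explicit rotation covers constructed in Lemmas \ref{redN} and \ref{redS}, and the present proposition is merely the assembly step. The only point deserving a word of care is that $\alpha$, being essential on a closed surface, falls into exactly one of the two dichotomy classes, which is immediate.
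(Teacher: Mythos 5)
Your proposal is correct and follows essentially the same route as the paper: pass to a power $\phi^k$ fixing the reducing curve, split into the nonseparating and separating cases handled by Lemmas \ref{redN} and \ref{redS}, and conclude via the identity $\H_1^{f.o.}(\widetilde{\S}, \Q)^{\phi} = \H_1^{f.o.}(\widetilde{\S}, \Q)^{\phi^k}$. No issues.
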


\begin{proof}
	Since $\phi \in \Mod(\S)$ is reducible, it follows that there exists an integer $n > 0$ and a simple closed curve $\alpha$ on $\S$ such that $\phi^n(\alpha) = \alpha$. Since any curve is either nonseparating or separating, Lemmas \ref{redN} and \ref{redS} imply that in both cases there exists a regular finite cover $\widetilde{\S} \to \S$ with $\dim \H_1^{f.o.}(\widetilde{\S}, \Q)^{\phi^n} \geq N$.
	We obtain
	$$ \dim \H_1^{f.o.}(\widetilde{\S}, \Q)^{\phi} = \dim \H_1^{f.o.}(\widetilde{\S}, \Q)^{\phi^n} \geq N.$$
	This concludes the proof.
\end{proof}

\section{Pseudo-Anosov case} \label{Sec4}

In this section we prove Theorem \ref{mainth} for a pseudo-Anosov $\phi$. First let us recall the following classical fact.

\begin{fact}[Five-term exact sequence]\cite[Corollary 6.4]{Brown} \label{fact1}
	Let
	\begin{equation*} \label{SES1}
		\xymatrix{
			1 \ar[r]   & P \ar[r]  & G \ar[r] & Q \ar[r]   & 1. 
		}
	\end{equation*}
be a short exact sequence of groups. Then the following sequence is exact.
	\begin{equation*} \label{SES2}
	\xymatrix{
		\H_2(G, \Q)\ar[r] & \H_2(Q, \Q) \ar[r]   & \H_1(P, \Q)_Q \ar[r]  & \H_1(G, \Q) \ar[r] & \H_1(Q, \Q) \ar[r]   & 0. 
	}
\end{equation*}
\end{fact}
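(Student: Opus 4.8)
The plan is to derive Fact \ref{fact1} from the homological Lyndon--Hochschild--Serre spectral sequence of the extension $1 \to P \to G \to Q \to 1$, namely
$$E^2_{pq} = \H_p(Q, \H_q(P, \Q)) \Longrightarrow \H_{p+q}(G, \Q),$$
in which $Q$ acts on $\H_q(P,\Q)$ through the conjugation action of $G$ on $P$ (see \cite{Brown}). Everything needed lives in total degree $\leq 2$, so only the three corner terms $E^2_{10}$, $E^2_{01}$, $E^2_{20}$ and the single differential $d_2\colon E^2_{20}\to E^2_{01}$ will matter.

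First I would identify these terms. Since $\H_0(P,\Q)=\Q$ carries the trivial $Q$-action, one gets $E^2_{20}=\H_2(Q,\Q)$ and $E^2_{10}=\H_1(Q,\Q)$; and $E^2_{01}=\H_0(Q,\H_1(P,\Q))=\H_1(P,\Q)_Q$ by definition of coinvariants. Any differential $d_r$ with $r\geq 3$ that starts or ends at one of the positions $(1,0)$, $(0,1)$, $(2,0)$ has its source or target outside the first quadrant, hence vanishes; likewise $d_2$ vanishes out of $(1,0)$ and into $(2,0)$, while $d_2\colon E^2_{20}\to E^2_{01}$ is the one surviving map. Therefore $E^\infty_{10}=E^2_{10}$, $E^\infty_{01}=\operatorname{coker}(d_2\colon E^2_{20}\to E^2_{01})$, and $E^\infty_{20}=\ker(d_2\colon E^2_{20}\to E^2_{01})$.

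Finally I would read off the five-term sequence from convergence. The filtration on $\H_1(G,\Q)$ has two nonzero graded pieces, giving a short exact sequence $0\to E^\infty_{01}\to \H_1(G,\Q)\to E^\infty_{10}\to 0$, and the projection onto the top graded piece of the filtration on $\H_2(G,\Q)$ gives a surjection $\H_2(G,\Q)\twoheadrightarrow E^\infty_{20}$. Splicing $\H_2(G,\Q)\twoheadrightarrow E^\infty_{20}=\ker d_2\hookrightarrow E^2_{20}$, then $d_2\colon E^2_{20}\to E^2_{01}$, then the edge epimorphism $E^2_{01}\twoheadrightarrow E^2_{01}/\operatorname{im}d_2=E^\infty_{01}\hookrightarrow \H_1(G,\Q)$, and finally $\H_1(G,\Q)\twoheadrightarrow E^\infty_{10}=\H_1(Q,\Q)$ produces exactly
$$\H_2(G,\Q)\to \H_2(Q,\Q)\to \H_1(P,\Q)_Q\to \H_1(G,\Q)\to \H_1(Q,\Q)\to 0,$$
with exactness at each spot immediate from the above descriptions of $E^\infty_{10}$, $E^\infty_{01}$, $E^\infty_{20}$; one also checks in the usual way that the outer maps are those induced by $P\hookrightarrow G$ and $G\to Q$, though the statement only claims exactness. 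I do not anticipate a genuine obstacle here: this is precisely the classical ``five-term exact sequence of a first-quadrant spectral sequence'', and the only point requiring care — the vanishing of all higher differentials touching the three corner positions — is automatic for degree reasons. If one wished to avoid spectral sequences altogether, surjectivity of $\H_1(G,\Q)\to\H_1(Q,\Q)$ and exactness at $\H_1(G,\Q)$ follow by comparing the abelianizations of $G$ and $G/P$, while the connecting map and exactness at the two middle terms can be extracted from Hopf's formula applied to a presentation of $Q$ lifted to $G$; the spectral-sequence route is, however, shorter.
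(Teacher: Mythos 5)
Your argument is correct: the identifications of the corner terms $E^2_{20}=\H_2(Q,\Q)$, $E^2_{10}=\H_1(Q,\Q)$, $E^2_{01}=\H_1(P,\Q)_Q$ in the homological Lyndon--Hochschild--Serre spectral sequence, the degree-reasons vanishing of all differentials other than $d_2\colon E^2_{20}\to E^2_{01}$, and the splicing of the filtration data are all standard and sound. The paper gives no proof of this Fact, citing it directly to Brown's book, where it is established by exactly this spectral-sequence argument, so your proposal coincides with the intended justification.
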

Note that if $P$ is finitely generated, then $\dim \H_1(P, \Q) < \infty$. In this case, since $\Q$ is a field, it follows that we have a canonical isomorphism $\H_1(P, \Q)_Q \cong \H_1(P, \Q)^Q$.
Therefore Fact \ref{fact1} immediately implies the following.
\begin{corollary}\label{corr}
		Let
	\begin{equation*} \label{SES3}
		\xymatrix{
			1 \ar[r]   & P \ar[r]  & G \ar[r] & \Z \ar[r]   & 1. 
		}
	\end{equation*}
	be a short exact sequence of groups. Assume that $P$ is finitely generated. Then the sequence
	\begin{equation*} \label{SES4}
		\xymatrix{
			0 \ar[r]   & \H_1(P, \Q)^\Z \ar[r]  & \H_1(G, \Q) \ar[r] & \Q  \ar[r]   & 0,
		}
	\end{equation*}
is exact. In particular, we have
$$\H_1(G, \Q) \cong \Q \oplus \H_1(P, \Q)^\Z.$$
\end{corollary}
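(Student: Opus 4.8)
The plan is to obtain Corollary~\ref{corr} as the special case $Q=\Z$ of the five-term exact sequence in Fact~\ref{fact1}. The only two facts about the group $\Z$ that the argument needs are the homology computations $\H_2(\Z,\Q)=0$ and $\H_1(\Z,\Q)\cong\Q$, both of which are immediate from the identification of the classifying space of $\Z$ with the circle $S^1$.

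First I would write down the five-term sequence
\[
\H_2(G,\Q)\longrightarrow \H_2(\Z,\Q)\longrightarrow \H_1(P,\Q)_\Z\longrightarrow \H_1(G,\Q)\longrightarrow \H_1(\Z,\Q)\longrightarrow 0
\]
attached to $1\to P\to G\to\Z\to 1$ and substitute $\H_2(\Z,\Q)=0$ and $\H_1(\Z,\Q)=\Q$. Vanishing of $\H_2(\Z,\Q)$ forces the map $\H_1(P,\Q)_\Z\to\H_1(G,\Q)$ to be injective, and the tail of the sequence identifies its cokernel with $\Q$; hence
\[
0\longrightarrow \H_1(P,\Q)_\Z\longrightarrow \H_1(G,\Q)\longrightarrow \Q\longrightarrow 0
\]
is exact. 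Then, invoking the hypothesis that $P$ is finitely generated --- so that $\H_1(P,\Q)$ is finite dimensional and, as recorded just before the statement, the canonical map $\H_1(P,\Q)^\Z\to\H_1(P,\Q)_\Z$ is an isomorphism --- I would replace the left-hand term by $\H_1(P,\Q)^\Z$, which gives the asserted short exact sequence. Note that $\H_1(G,\Q)$ is then automatically finite dimensional, being an extension of $\Q$ by $\H_1(P,\Q)^\Z$.

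Finally, the displayed direct sum decomposition follows because every short exact sequence of $\Q$-vector spaces splits: a linear section of $\H_1(G,\Q)\twoheadrightarrow\Q$ yields $\H_1(G,\Q)\cong\Q\oplus\H_1(P,\Q)^\Z$. I do not expect a genuine obstacle in this argument; the only steps that require justification are the vanishing $\H_2(\Z,\Q)=0$ and the coinvariants--invariants identification, and both are standard.
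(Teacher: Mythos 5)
Your argument is exactly the one the paper intends: Corollary~\ref{corr} is presented there as an immediate consequence of the five-term sequence plus the remark that invariants and coinvariants agree for a finite-dimensional $\Q$-representation, and you have simply written out the substitutions $\H_2(\Z,\Q)=0$, $\H_1(\Z,\Q)\cong\Q$ and the splitting of a short exact sequence of $\Q$-vector spaces. One caveat, which you inherit from the paper's own remark: the canonical map $V^\Z\to V_\Z$ (the composite $V^\Z\hookrightarrow V\twoheadrightarrow V_\Z$) is \emph{not} in general an isomorphism for a finite-dimensional $\Q[\Z]$-module $V$. For instance, if a generator acts on $\Q^2$ by $\bigl(\begin{smallmatrix}1&1\\0&1\end{smallmatrix}\bigr)$, then $V^\Z=\ker(T-1)$ coincides with $(T-1)V$, so the composite is zero. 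What is true, and what suffices here, is the dimension equality $\dim V^\Z=\dim\ker(T-1)=\dim\mathrm{coker}(T-1)=\dim V_\Z$ from rank--nullity applied to $T-1$; since a short exact sequence of finite-dimensional $\Q$-vector spaces is determined up to isomorphism by dimensions, this still gives $\H_1(G,\Q)\cong\Q\oplus\H_1(P,\Q)^\Z$, which is the only form of the corollary used in the proof of Proposition~\ref{anosov}. If you want the displayed sequence to be exact with the natural maps, keep $\H_1(P,\Q)_\Z$ on the left, or justify the replacement by the dimension count rather than by a claimed canonical isomorphism.
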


The key tool for our proof is the following deep theorem, which is a consequence of results of Haglund \cite{H8}, Haglund-Wise \cite{HW8}, and Agol \cite{Agol8, Agol13}.

\begin{theorem} \cite[Corollary 4.2.3 (3)]{Friedl} \label{virt}
	Let $M$ be a closed hyperbolic $3$-manifold. Then $vb_1(M) = \infty$, i.e. 
	there exist finite regular covers $\widetilde{M} \to M$ with arbitrary large $\dim \H_1(\widetilde{M}, \Q)$.
\end{theorem}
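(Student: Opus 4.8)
My plan is not to reprove Theorem~\ref{virt} from scratch — there is no genuinely $3$-dimensional shortcut to it — but to assemble it from the structural theory of hyperbolic $3$-manifold groups, and I will single out the one deep input I would use as a black box. The guiding idea is to promote $\pi_1(M)$ to a group admitting, after passing to finite-index subgroups, surjections onto free groups of unbounded rank: once $G_1 \le \pi_1(M)$ has finite index and $G_1 \twoheadrightarrow F_2$, an index-$(n-1)$ subgroup of $F_2$ is free of rank $n$, so pulling it back to $G_1$ and then to $\pi_1(M)$ produces a finite regular cover $\widetilde{M} \to M$ with $\dim \H_1(\widetilde{M}, \Q) \ge n$; letting $n \to \infty$ gives $vb_1(M) = \infty$.

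\textbf{Step 1 (cubulate $\pi_1(M)$).} First I would invoke the Kahn--Markovic surface subgroup theorem to produce an abundance of quasi-Fuchsian, hence quasiconvex, closed surface subgroups of $\pi_1(M)$; feeding the resulting codimension-one subgroups into Sageev's construction, as carried out by Bergeron and Wise, yields a proper cocompact action of $\pi_1(M)$ on a $\mathrm{CAT}(0)$ cube complex.

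\textbf{Step 2 (virtual specialness).} Next I would apply Agol's theorem~\cite{Agol13} — which builds on Wise's theory of special cube complexes and his Malnormal Special Quotient Theorem, together with the Haglund--Wise canonical completion and retraction~\cite{HW8,H8} — that every hyperbolic group acting properly and cocompactly on a $\mathrm{CAT}(0)$ cube complex is virtually special. Hence $M$ has a finite cover $M_0$ with $\pi_1(M_0)$ isomorphic to a quasiconvex subgroup of a right-angled Artin group; in particular $\pi_1(M_0)$, and therefore $\pi_1(M)$, is LERF and virtually RFRS, and $M$ virtually fibers over $S^1$. This is the step I expect to be the main obstacle: I would import it, together with the Kahn--Markovic input that makes its hypothesis available, as a black box, since a self-contained proof is far out of reach.

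\textbf{Step 3 (extract unbounded $b_1$).} Finally, once $\pi_1(M)$ is known to be virtually RFRS with virtual positive first Betti number, Agol's virtual fibering criterion~\cite{Agol8} and the now-standard arguments collected in~\cite{Friedl} yield that $\pi_1(M)$ is \emph{large}, i.e.\ has a finite-index subgroup surjecting onto $F_2$; the reduction described in the first paragraph then completes the proof. The bookkeeping in Step~3 is routine once Step~2 is granted — essentially all of the weight of the theorem sits in Agol's virtual specialness theorem.
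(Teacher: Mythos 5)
The paper does not actually prove this theorem: it imports it wholesale from \cite[Corollary 4.2.3 (3)]{Friedl} as a consequence of the Haglund, Haglund--Wise, and Agol results, and your outline (Kahn--Markovic plus Bergeron--Wise cubulation, Agol's virtual specialness, largeness, and Nielsen--Schreier to make the rank grow) is precisely the standard derivation recorded there, so you are taking the same route with the same black boxes. The only step you leave implicit is that the pulled-back finite-index subgroup of $\pi_1(M)$ need not be normal; passing to its normal core fixes this without decreasing $b_1$, since the transfer map injects $\H_1$ of a space into $\H_1$ of any finite cover over $\Q$.
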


Now we can prove Theorem \ref{mainth} for the pseudo-Anosov case.

\begin{prop} \label{anosov}
	Let $\phi \in \Mod(\S)$ be a pseudo-Anosov element and let $N > 0$ be an integer. Then there exists a regular finite cover $\widetilde{\S} \to \S$ with $\dim \H_1^{f.o.}(\widetilde{\S}, \Q)^\phi \geq N$.
\end{prop}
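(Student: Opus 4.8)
The plan is to realize a deep cover of $\S$ inside a hyperbolic $3$-manifold cover of the mapping torus $M_\phi$, and then read off finite-orbit homology of the surface cover from the ordinary first homology of the $3$-manifold cover via the five-term exact sequence.

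First I would use Theorem \ref{hyp}: since $\phi$ is pseudo-Anosov, $M_\phi$ is a closed hyperbolic $3$-manifold fibering over $S^1$ with fiber $\S$, so we have a short exact sequence $1 \to \pi_1(\S) \to \pi_1(M_\phi) \to \Z \to 1$. By Theorem \ref{virt}, $vb_1(M_\phi) = \infty$, so for the given $N$ there is a finite regular cover $\widetilde{M} \to M_\phi$ with $\dim \H_1(\widetilde{M}, \Q) \geq N + 1$. Next I would intersect $\pi_1(\widetilde{M})$ with the fiber subgroup: set $K = \pi_1(\widetilde{M}) \cap \pi_1(\S)$, a finite index subgroup of $\pi_1(\S)$, which after passing to the normal core (and using Corollary \ref{cor1}, which only increases the relevant dimension) we may assume corresponds to a finite regular cover $\widetilde{\S} \to \S$. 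The point is that $\pi_1(\widetilde{M})$ sits in a short exact sequence $1 \to K \to \pi_1(\widetilde{M}) \to \Z' \to 1$, where $\Z'$ is a finite-index subgroup of $\Z$, i.e. again isomorphic to $\Z$; moreover the action of a generator of $\Z'$ on $\H_1(K, \Q)$ is precisely the action of some positive power $\phi^m$ (the power being determined by how $\widetilde{M}$ meets the fibration), possibly composed with a deck transformation, which is exactly the data defining $\H_1^{f.o.}(\widetilde{\S}, \Q)^\phi$.

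Then I would apply Corollary \ref{corr} to this sequence: since $K = \pi_1(\widetilde{\S})$ is finitely generated, $\H_1(\widetilde{M}, \Q) \cong \Q \oplus \H_1(K, \Q)^{\Z'}$, so $\dim \H_1(K, \Q)^{\Z'} = \dim \H_1(\widetilde{M}, \Q) - 1 \geq N$. Finally, $\H_1(K, \Q)^{\Z'}$ is by definition contained in $\H_1^{f.o.}(K, \Q)^\phi$ (a class fixed by $\phi^m$ certainly has finite $\phi$-orbit), and under the canonical isomorphism $\H_1^{f.o.}(K, \Q)^\phi \cong \H_1^{f.o.}(\widetilde{\S}, \Q)^\phi$ this gives $\dim \H_1^{f.o.}(\widetilde{\S}, \Q)^\phi \geq N$. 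If one instead passes to a characteristic cover via Corollary \ref{cor1}, the dimension only grows, so the conclusion still holds.

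The main obstacle I anticipate is bookkeeping the relationship between the $\Z$-action coming from the $3$-manifold fibration and the $\phi$-action coming from the mapping class group — specifically, verifying that the covering of $\S$ produced is genuinely of the form required (regular, and with the monodromy of $\widetilde{M} \to S^1$ restricting to an honest power of $\phi$ up to deck transformations), and that $K$ being $\phi^m$-invariant is compatible with the identification $\H_1(\pi_1(\widetilde{\S}), \Q) \cong \H_1(\widetilde{\S},\Q)$. This is where one must be careful that the finite cover of $S^1$ is connected (so that $\Z' \cong \Z$ rather than a product) and that the fiber of $\widetilde{M} \to S^1$ is connected and equals the cover $\widetilde{\S}$; replacing $\widetilde{M}$ by a further finite cover if necessary handles this. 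Everything else is a direct application of the five-term sequence and Lemma \ref{subgr}/Corollary \ref{cor1}.
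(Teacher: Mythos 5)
Your proposal is correct and follows essentially the same route as the paper: mapping torus, Thurston's hyperbolization, Agol's theorem giving $vb_1(M_\phi)=\infty$, intersecting $\pi_1(\widetilde{M})$ with the fiber subgroup, and the five-term exact sequence identifying $\H_1(\widetilde{M},\Q)$ with $\Q\oplus\H_1(\widetilde{\S},\Q)^{\phi^n}$, whose second summand lies in $\H_1^{f.o.}(\widetilde{\S},\Q)^{\phi}$. The only superfluous step is "passing to the normal core": $K=\pi_1(\widetilde{M})\cap\pi_1(\S)$ is already normal in $\pi_1(M_\phi)$, hence in $\pi_1(\S)$, as an intersection of two normal subgroups, which is how the paper obtains regularity of the surface cover directly (and likewise your worry about connectedness is handled automatically by the group-theoretic setup).
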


\begin{proof}
	Let $M_\phi$ be the mapping torus of $\phi$. 
	Since there is a fiber bundle $\S \to M_\phi \to S^1$ with aspherical $S^1$ and $\S$, we obtain the short exact sequence of groups
	\begin{equation} \label{SES}
		\xymatrix{
			1 \ar[r]   & \pi_1(\S) \ar[r]  & \pi_1(M_\phi)  \ar[r] & \Z  \ar[r]   & 1. 
		}
	\end{equation}
	
	By Theorem \ref{hyp} the $3$-manifold $M_\phi$ admits a hyperbolic metric. Theorem \ref{virt} implies that there exists a regular cover $q: \widetilde{M} \to M_\phi$ such that $b_1(\widetilde{M}) = \dim \H_1(\widetilde{M}, \Q) \geq N+1$. We have the normal subgroup $\pi_1(\widetilde{M}) \lhd \pi_1(M_\phi)$ is of finite index. Denote $\Gamma = \pi_1(\widetilde{M}) \cap \pi_1(\S)$. Since $ \pi_1(\widetilde{M})$ and  $\pi_1(\S)$ are normal subgroups in $\pi_1(M_\phi)$, then so is $\Gamma$.
	
	Since $[\pi_1(M_\phi) : \pi_1(\widetilde{M})] < \infty$, then $[\pi_1(\S) : \Gamma] < \infty$. Therefore, there exists a finite regular cover $p: \widetilde{\S} \to \S$, such that $p_*(\pi_1(\widetilde{\S})) = \Gamma$. Therefore, (\ref{SES}) implies that we obtain the following commutative diagram
	\begin{equation} \label{cover}
		\xymatrix{
			1 \ar[r]   & \pi_1(\widetilde{\S}) \ar[r] \ar@{^{(}->}[d]^{p_*}  & \pi_1(\widetilde{M})  \ar[r] \ar@{^{(}->}[d]^{q_*}  & \Z  \ar[r] \ar@{^{(}->}[d]^{n}  & 1 \\
			1 \ar[r]   & 	\pi_1(\S)  \ar[r]  & \pi_1(M_\phi)  \ar[r]   & \Z  \ar[r]  & 1 
		}
	\end{equation}
	where the right vertical arrow is the multiplication by some integer $n$.  
	Since $[\pi_1(M_\phi) : \pi_1(\widetilde{M})] < \infty$, it follows that $p_*(\pi_1(\widetilde{\S})) = \Gamma \neq \pi_1(\widetilde{M})$. Therefore, we obtain that $n \neq 0$.
	 
	Note that the group $\Z$ in the bottom line of (\ref{cover}) acts on $\pi_1(\S)$ by $\phi$. Hence the group $\Z$ in the top line of (\ref{cover}) acts on $\pi_1(\widetilde{\S})$ by $\phi^n$. In particular, $\pi_1(\widetilde{\S})$ is $\phi^n$-invariant. 
	Then Corollary \ref{corr} applied to the top line of (\ref{cover}) implies
	$$
		\H_1(\pi_1(\widetilde{M}), \Q) \cong \Q \oplus \H_1(\widetilde{\S}, \Q)^{\phi^n},
	$$
	and since $\widetilde{M}$ is also hyperbolic, we obtain $\H_1(\widetilde{M}, \Q) \cong \H_1(\pi_1(\widetilde{M}), \Q)$.
	
	Hence we have 
	$$\dim \H_1(\widetilde{\S}, \Q)^{\phi^n} =  \dim \H_1(\pi_1(\widetilde{M}), \Q) - 1 = \dim \H_1(\widetilde{M}, \Q)  - 1 = b_1(\widetilde{M}) - 1 \geq N.$$
	Since 
	$$ \H_1(\widetilde{\S}, \Q)^{\phi^n} \subseteq \H_1^{f.o.}(\widetilde{\S}, \Q)^\phi,$$
	it follows that 
	$$ \dim \H_1^{f.o.}(\widetilde{\S}, \Q)^\phi \geq \dim  \H_1(\widetilde{\S}, \Q)^{\phi^n} \geq N.$$
	This concludes the proof of Theorem \ref{mainth}.
\end{proof}

Now we can finish the proof of the main result.

\begin{proof}[Proof of Theorem \ref{mainth}]
	By the Nielsen-Thurston classification we have that $\phi$ is periodic, reducible, or pseudo-Anosov. If $\phi$ is periodic, then $\phi^n = \id$ for some integer $n > 0$. In this case for any characteristic cover $\widetilde{\S} \to \S$ we obtain $$\H_1^{f.o.}(\widetilde{\S}, \Q)^\phi = \H_1^{f.o.}(\widetilde{\S}, \Q)^{\phi^n} = \H_1^{f.o.}(\widetilde{\S}, \Q)^{\id} = \H_1(\widetilde{\S}, \Q).$$
	Hence the result follows from the fact that there exist finite characteristic covers of an arbitrary large degree.
	
	For reducible and pseudo-Anosov cases, by Propositions \ref{red} and \ref{anosov}, we obtain that there exists a finite regular cover $\widetilde{\S} \to \S$ such that $\dim \H_1^{f.o.}(\widetilde{\S}, \Q)^\phi \geq N$. Note that $ \H_1^{f.o.}(\widetilde{\S}, \Q)^\phi \cong  \H_1^{f.o.}(K, \Q)^\phi $, where $K = \pi_1(\widetilde{\S}) \lhd \pi_1(\S)$.
	Corollary \ref{cor1} implies that there exists a finite index characteristic subgroup $K' \lhd \pi_1(\S)$ such that $\dim \H_1^{f.o.}(K', \Q)^\phi \geq \dim \H_1^{f.o.}(K, \Q)^\phi$. Denote by $\widetilde{\S}' \to \S$ the characteristic cover corresponding to $K'$. Then we obtain 
	$$\dim \H_1^{f.o.}(\widetilde{\S}', \Q)^\phi = \dim \H_1^{f.o.}(K', \Q)^\phi  \geq \dim \H_1^{f.o.}(K, \Q)^\phi  = \H_1^{f.o.}(\widetilde{\S}, \Q)^\phi  \geq N.$$
	This concludes the proof.
\end{proof}


\begin{thebibliography}{00}
	
	\addcontentsline{toc}{section}{\bibname}
	
	\bibitem{Agol8}  I. Agol, Criteria for virtual fibering, J. Topol. 1 (2008), no. 2, 269–284.
	
	\bibitem{Agol13}  I. Agol, The virtual Haken conjecture, with an appendix by I. Agol, D. Groves and J. Manning, Doc. Math. 18 (2013), 1045–1087.
	
	\bibitem{Boggi}  M. Boggi., A. Putman. and N. Salter. Generating the homology of covers of surfaces. (2023) arXiv preprint arXiv:2305.13109.
	
	\bibitem{Brown} K. S. Brown, Cohomology of groups, Graduate Texts in Mathematics, vol. 87, Springer-Verlag, New York, 1982.
	
	\bibitem{FarbHensel} B. Farb. and S. Hensel, 2016. Finite covers of graphs, their primitive homology, and representation theory. New York Journal of Mathematics, 22.
	
	\bibitem{FarbHensel2} B. Farb, and S. Hensel. 2017. Moving homology classes in finite covers of graphs. Israel Journal of Mathematics, 220, pp.605-615.

	
	\bibitem{Primer}  B. Farb, D.Margalit. A Primer on Mapping Class Groups.
	Princeton University Press, 2012.
	
	\bibitem{Grunewald} F. Grunewald, M. Larsen, A. Lubotzky, and J.Malestein. Arithmetic quotients of the mapping class group. Geometric and Functional Analysis 25 (2015): 1493-1542.
	
	\bibitem{H8} F. Haglund, Finite index subgroups of graph products, Geom. Dedicata 135 (2008), 167–209.
	
	\bibitem{HW8}  F. Haglund, D. Wise, Special cube complexes, Geom. Funct. Anal. 17 (2008), no.
	5, 1551–1620.

	
	\bibitem{Ivanov}  N. V. Ivanov, Fifteen problems about the mapping class groups, in Problems on mapping class groups and related topics, 71–80, Proc. Sympos. Pure Math., 74, Amer. Math. Soc., Providence, RI.
	
	\bibitem{Friedl}  Aschenbrenner, M., Friedl, S., Wilton, H. and Friedl, S., 2015. 3-manifold groups (Vol. 20). Zürich: European Mathematical Society.
	
	\bibitem{Hadari15} A. Hadari. Every infinite order mapping class has an infinite order action on the homology of some finite cover. (2015) arXiv preprint arXiv:1508.01555.
	
	\bibitem{Hadari20} A. Hadari. Homological eigenvalues of lifts of pseudo-Anosov mapping classes to finite covers. Geometry \& Topology, 24(4), (2020) pp.1717-1750.
	
	\bibitem{Kirby} R. Kirby, Problems in low dimensional manifold theory, in Algebraic and geometric topology (Proc. Sympos. Pure Math., Stanford Univ., Stanford, Calif., 1976), Part 2, 273–312, Proc. Sympos. Pure Math., XXXII Amer. Math. Soc., Providence, RI.
	
	\bibitem{Koberda}  T. Koberda, Asymptotic linearity of the mapping class group and a homological version of the Nielsen-Thurston classification, Geom. Dedicata 156 (2012), 13–30.
	
	\bibitem{KoberdaPhD}  T. Koberda. Mapping class groups, homology and finite covers of surfaces. PhD diss., Harvard University, 2012.
	
	\bibitem{Klukowski}  A. Klukowski, Simple closed curves, non-kernel homology and Magnus embedding, preprint 2023. arXiv:2304.13196.
	
	\bibitem{Krop} R. Kropholler, S. Vidussi, G. Walsh, 2021. Virtual algebraic fibrations of surface-by-surface groups and orbits of the mapping class group. arXiv preprint arXiv:2103.06930.
	
    \bibitem{Looijenga} E. Looijenga, Prym representations of mapping class groups, Geom. Dedicata 64 (1997), no. 1, 69–83.

	
	\bibitem{Liu} Y. Liu. Virtual homological spectral radii for automorphisms of surfaces. Journal of the American Mathematical Society, 33(4), (2020) pp.1167-1227.
	
	\bibitem{Malestein}  J. Malestein, A. Putman, Simple closed curves, finite covers of surfaces, and power subgroups of Out(Fn), Duke Math. J. 168 (2019), no. 14, 2701–2726. arXiv:1708.06486.
	
	\bibitem{PutmanWieland} A. Putman, B. Wieland. Abelian quotients of subgroups of the mapping class group and higher Prym representations. Journal of the London Mathematical Society 88, no. 1 (2013): 79-96.
	
	
	\bibitem{Thurston}  W. P. Thurston, On the geometry and dynamics of diffeomorphisms of surfaces, Bull. Amer. Math. Soc. (N.S.) 19 (1988), no. 2, 417–431.
	
	
\end{thebibliography}
\end{document}